\newcommand*{\mailto}[1]{\href{mailto:#1}{\nolinkurl{#1}}}
\newtheorem{theorem}{Theorem}[section]
\newtheorem{lemma}[theorem]{Lemma}
\newtheorem{corollary}[theorem]{Corollary}
\newtheorem{remark}[theorem]{Remark}
\newtheorem{hypothesis}[theorem]{Hypothesis}
\newcommand{\R}{{\mathbb R}}
\newcommand{\N}{{\mathbb N}}
\newcommand{\Z}{{\mathbb Z}}
\newcommand{\C}{{\mathbb C}}
\newcommand{\OO}{\mathcal{O}}
\newcommand{\oo}{o}
\newcommand{\be}{\begin{equation}}
\newcommand{\ee}{\end{equation}}
\newcommand{\ti}{\tilde}
\newcommand{\E}{\mathrm{e}}
\newcommand{\I}{\mathrm{i}}
\newcommand{\im}{\mathrm{Im}}
\newcommand{\re}{\mathrm{Re}}
\newcommand{\ceil}[1]{\lceil#1 \rceil}
\newcommand{\eps}{\varepsilon}
\newcommand{\sig}{\sigma}
\newcommand{\lam}{\lambda}
\numberwithin{equation}{section}
\begin{document}

\title[Uniqueness Results for Schr\"odinger Operators]{Uniqueness Results for One-Dimensional Schr\"odinger Operators with Purely Discrete Spectra}

\author[J.\ Eckhardt]{Jonathan Eckhardt}
\address{Faculty of Mathematics\\ University of Vienna\\
Nordbergstrasse 15\\ 1090 Wien\\ Austria}
\email{\mailto{jonathan.eckhardt@univie.ac.at}}
\urladdr{\url{http://homepage.univie.ac.at/jonathan.eckhardt/}}

\author[G.\ Teschl]{Gerald Teschl}
\address{Faculty of Mathematics\\ University of Vienna\\
Nordbergstrasse 15\\ 1090 Wien\\ Austria\\ and International
Erwin Schr\"odinger
Institute for Mathematical Physics\\ Boltzmanngasse 9\\ 1090 Wien\\ Austria}
\email{\mailto{Gerald.Teschl@univie.ac.at}}
\urladdr{\url{http://www.mat.univie.ac.at/~gerald/}}

\thanks{Trans.\ Amer.\ Math.\ Soc.\ {\bf 365}, 3923--3942 (2013)}
\thanks{{\it Research supported by the Austrian Science Fund (FWF) under Grant No.\ Y330}}

\keywords{Schr\"odinger operators, inverse spectral theory, discrete spectra}
\subjclass[2010]{Primary 34B20, 34L05; Secondary 34B24, 47A10}

\begin{abstract}
We provide an abstract framework for singular one-dimensional Schr\"odinger operators  with purely discrete spectra
to show when the spectrum plus norming constants determine such an operator completely. As an example we apply
our findings to prove new uniqueness results for perturbed quantum mechanical harmonic oscillators. In addition, we
also show how to establish a Hochstadt--Lieberman type result for these operators. Our approach is based on
the singular Weyl--Titchmarsh--Kodaira theory which is extended to cover the present situation.
\end{abstract}

\maketitle

\section{Introduction}
\label{sec:int}

The present paper is concerned with uniqueness results for one-dimensional Schr\"o\-dinger operators
\be
H = -\frac{d^2}{dx^2} + q(x), \qquad x\in (a,b),
\ee
on the Hilbert space $L^2(a,b)$ with a real-valued potential $q\in L^1_{\mathrm{loc}}(a,b)$. We are particularly interested
in the case where $H$ has purely discrete spectrum. Of course this problem is well understood in the case where the
operator is regular, that is, $(a,b)$ is compact and $q\in L^1(a,b)$, but for singular operators there are still many
open questions. One of the prime examples in this respect are perturbations of the quantum mechanical oscillator and in
particular its isospectral class \cite{le}, \cite{mktr}. In particular, perturbations 
\be\label{eqopperho}
H = -\frac{d^2}{dx^2} + x^2 + q(x), \qquad x\in (-\infty,\infty),
\ee
of the harmonic oscillator have attracted much interest recently; see Chelkak, Kargaev and Korotyaev \cite{chelkak}, \cite{ckk}, \cite{ckk2}
and the references therein.

Moreover, it has been shown by Kodaira \cite{ko}, Kac \cite{ka} and more recently by Fulton \cite{ful08}, Gesztesy and Zinchenko \cite{gz},
Fulton and Langer \cite{fl}, Kurasov and Luger \cite{kl}, and Kostenko, Sakhnovich, and Teschl \cite{kt}, \cite{kst}, \cite{kst2}, \cite{kst3}
that, for a large class of singularities at $a$, it is still possible to define a singular Weyl function at the base point $a$.
While in these previous works the main focus was on applications to spherical Schr\"odinger operators
\be\label{eqopperbe}
H = -\frac{d^2}{dx^2} + \frac{l(l+1)}{x^2} + q(x), \qquad x\in (0,\infty),
\ee
(also known as Bessel operators), our interest here will be to apply these techniques to operators of the form \eqref{eqopperho}.

For a different approach to uniqueness results for one-dimensional Schr\"odinger operators based on Krein's spectral shift
function we refer to \cite{gs1}.

The outline of our paper is as follows: In Section~\ref{sec:swm} we will fix our notation and recall some basic facts from
singular Weyl--Titchmarsh theory. In \cite{kst2} the authors have proven a local Borg--Marchenko theorem in the case
where the spectrum created by the singular endpoint has convergence exponent less than one.  While this was
sufficient to cover Bessel-type operators \eqref{eqopperbe}, it is not good enough for \eqref{eqopperho} where the
convergence exponent will be one. Hence our first aim will be to extend the results from \cite{kst2} to arbitrary
(finite) growth orders in Section~\ref{sec:egr} such that we can provide an associated local Borg--Marchenko theorem
in Section~\ref{sec:lbmt}. In Section~\ref{sec:urds} we will then use this to prove uniqueness results for operators
with purely discrete spectrum. We will provide a general result which shows that the spectrum together with the
norming constants uniquely determines the operator. As a special case we will obtain a (slight) generalization of the
main result from \cite{ckk}.

The Borg--Marchenko uniqueness theorem is of course also the main ingredient in a vast number of other
uniqueness results in inverse spectral theory. One of these results which had particular impact is the celebrated
Hochstadt--Lieberman theorem \cite{hl}. Hence we will try to use this direction as a test case for our results and prove a
powerful generalization of this famous theorem to singular operators with discrete spectra. In fact, while
many extensions are known to date, we refer to \cite{sa}, \cite{ho}, \cite{hm} for recent accounts, most of them
concern regular operators (including the case where the potential is a distribution) and we are only aware of two references
dealing with singular operators. First of all the work by Gesztesy and Simon \cite{gs2}, who considered the case of operators which
grow faster than the harmonic operator and satisfy $q(-x)\ge q(x)$. Secondly Khodakovsky \cite{kh}, who improved their
result and removed the growth restriction. However, there are many interesting physical examples which are not
covered by this result. For example, P\"oschl--Teller type potentials, which have non-integrable singularities near the
endpoints, or perturbations of the harmonic oscillator \eqref{eqopperho}. We will show in Sections~\ref{sec:urds}
and \ref{sec:pho} that our result is able to cover these examples.

\section{Singular Weyl--Titchmarsh theory}
\label{sec:swm}

Our fundamental ingredient will be singular Weyl--Titchmarsh theory and hence we begin by recalling
the necessary facts from \cite{kst2}.
To set the stage, we will consider one-dimensional Schr\"odinger operators on $L^2(a,b)$
with $-\infty \le a<b \le \infty$ of the form
\begin{equation} \label{stli}
\tau = - \frac{d^2}{dx^2} + q(x),
\end{equation}
where the potential $q$ is real-valued satisfying
\begin{equation}
q \in L^1_{loc}(a,b).
\end{equation}
We will use $\tau$ to denote the formal differential expression and $H$ to denote a corresponding
self-adjoint operator given by $\tau$ with separated boundary conditions at $a$ and/or $b$.

We will choose a point $c\in(a,b)$ and also consider the operators $H^D_{(a,c)}$, $H^D_{(c,b)}$
which are obtained by restricting $H$ to $(a,c)$, $(c,b)$ with a Dirichlet boundary condition at
$c$, respectively. The corresponding operators with a Neumann boundary condition will be
denoted by $H^N_{(a,c)}$ and $H^N_{(c,b)}$. 
Moreover, let $c(z,x)$, $s(z,x)$ be the solutions of $\tau u = z\, u$ corresponding
to the initial conditions $c(z,c)=1$, $c'(z,c)=0$ and $s(z,c)=0$, $s'(z,c)=1$. 
Define the Weyl functions (corresponding to the base point $c$) such that
\begin{subequations}
\begin{align}
u_-(z,x) &= c(z,x) - m_-(z) s(z,x), \qquad z\in\C\setminus\sig(H^D_{(a,c)}),\\\label{defupm}
u_+(z,x) &= c(z,x) + m_+(z) s(z,x), \qquad z\in\C\setminus\sig(H^D_{(c,b)}),
\end{align}
\end{subequations}
are square integrable near $a$, $b$ and satisfy the boundary condition of $H$
at $a$, $b$ (if any), respectively. The solutions $u_\pm(z,x)$ (as well as their multiples)
are called Weyl solutions at $a$, $b$.
For further background we refer to \cite[Chap.~9]{tschroe} or \cite{wdln}.

To define an analogous singular Weyl function at the, in general singular, endpoint $a$ we will
first need the analog of the system of solutions $c(z,x)$ and $s(z,x)$. Hence our first goal is
to find a system of entire solutions $\theta(z,x)$ and $\phi(z,x)$ such that $\phi(z,x)$ lies in the domain of
$H$ near $a$ and such that the Wronskian
\be
W(\theta,\phi)= \theta(z,x) \phi'(z,x) - \theta'(z,x)\phi(z,x)=1.
\ee
To this end we start with a
hypothesis which will turn out necessary and sufficient for such a system of solutions to exist.

\begin{hypothesis}\label{hyp:gen}
Suppose that the spectrum of $H^D_{(a,c)}$ is purely discrete for one (and hence for all)
$c\in (a,b)$.
\end{hypothesis}

Note that this hypothesis is for example satisfied if $q(x) \to +\infty$ as $x\to a$ (cf.\ Problem~9.7 in \cite{tschroe}).

\begin{lemma}[\cite{kst2}]\label{lem:pt}
The following properties are equivalent:
\begin{enumerate}
\item The spectrum of $H_{(a,c)}^D$ is purely discrete for some $c\in(a,b)$.
\item There is a real entire solution $\phi(z,x)$, which is non-trivial and lies in the domain of $H$ near $a$ for each $z\in\C$.
\item There are real entire solutions $\phi(z,x)$, $\theta(z,x)$ with $W(\theta,\phi)=1$, such that
 $\phi(z,x)$ is non-trivial and lies in the domain of $H$ near $a$ for each $z\in\C$.
\end{enumerate}
\end{lemma}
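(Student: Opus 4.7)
My plan is the cycle $(iii)\Rightarrow(ii)\Rightarrow(i)\Rightarrow(iii)$; the first implication is immediate.

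For $(ii)\Rightarrow(i)$ the key observation is that, since $\phi(z,\cdot)$ is a nontrivial solution lying in the domain of $H$ near $a$, it is proportional (at each $z$) to the Weyl solution $u_-(z,\cdot)$ of $\tau u = zu$. Expanding $\phi(z,x)=\phi(z,c)c(z,x)+\phi'(z,c)s(z,x)$ and comparing with $u_-=c-m_- s$, one obtains
\be
m_-(z) = -\frac{\phi'(z,c)}{\phi(z,c)},
\ee
so $m_-$ is meromorphic on $\C$. To conclude I would first verify that $\phi(z,c)$ does not vanish identically --- otherwise $\phi(z,\cdot)=\phi'(z,c)s(z,\cdot)$ would put $s(z,\cdot)$ in the domain of $H^D_{(a,c)}$ for a dense set of $z$, producing a real line of eigenvalues for this self-adjoint operator, which is impossible. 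Since a Weyl $m$-function is meromorphic on $\C$ if and only if the corresponding half-line operator has purely discrete spectrum, discreteness of $\sig(H^D_{(a,c)})$ follows.

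For $(i)\Rightarrow(iii)$, discreteness of $\sig(H^D_{(a,c)})$ makes $m_-(z)$ meromorphic on $\C$ with simple poles at the eigenvalues $\lam_n$ and residues $-\gam_n$, $\gam_n>0$. I would pick a real entire function $f$ with simple zeros exactly at the $\lam_n$ and set
\be
\phi(z,x) = f(z)\bigl(c(z,x) - m_-(z)s(z,x)\bigr),
\ee
which is then entire in $z$, real, nontrivial, and proportional to $u_-$, hence in the domain of $H$ near $a$. For $\theta$ I would try the ansatz $\theta(z,x)=c(z,x)-\ti m(z)s(z,x)$ with $\ti m(z)=m_-(z)+1/f(z)$; a short Wronskian calculation yields $W(\theta,\phi)=f(z)(\ti m(z)-m_-(z))=1$. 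Entirety of $\theta$ is equivalent to cancellation of the residues of $m_-$ and $1/f$ at each $\lam_n$, i.e.\ to the interpolation condition $f'(\lam_n)=1/\gam_n$. I would produce such an $f$ by taking any Weierstrass product $g$ with simple zeros $\{\lam_n\}$ and multiplying by $\exp(h(z))$, where $h$ is a real entire function interpolating $-\log(\gam_n g'(\lam_n))$ on the discrete set $\{\lam_n\}$; the existence of such an $h$ is guaranteed by the classical Weierstrass interpolation theorem.

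The main obstacle is $(i)\Rightarrow(iii)$: although each algebraic step is Wronskian bookkeeping, one must simultaneously prescribe the zero set \emph{and} the derivative values of an entire function, with no growth control available on $\{\lam_n\}$ or $\{\gam_n\}$ at this stage. For mere existence of $\phi$ and $\theta$ this is unproblematic; growth control of $\phi$ and $\theta$ will instead become the central issue in Section~\ref{sec:egr}.
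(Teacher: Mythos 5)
Your cycle and the first two implications are sound: (iii)$\Rightarrow$(ii) is trivial, and your (ii)$\Rightarrow$(i) --- $m_-(z)=-\phi'(z,c)/\phi(z,c)$ is meromorphic, after ruling out $\phi(\,\cdot\,,c)\equiv0$ via countability of the eigenvalues of the self-adjoint $H^D_{(a,c)}$ --- is exactly how the converse direction is argued inside Theorem~\ref{thm:IOphiev}. The genuine gap is in (i)$\Rightarrow$(iii). Entirety of $\ti m=m_-+1/f$ forces the residue condition $f'(\lam_n)=-1/\operatorname{Res}_{\lam_n}m_-=1/\gam_n$, which has the \emph{same} sign for every $n$, because $\pm m_-$ is a Herglotz function and so the residues at all its poles have one fixed sign. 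But a real entire function whose zero set is exactly the real, simple points $\{\lam_n\}$ must have derivatives of \emph{alternating} sign at consecutive zeros: between neighboring zeros $f$ keeps a constant sign, so an upward crossing at $\lam_n$ forces a downward crossing at $\lam_{n+1}$. Since $H^D_{(a,c)}$ has infinitely many eigenvalues, no admissible $f$ exists, and your interpolation step breaks precisely there: you need $\E^{h(\lam_n)}=1/(\gam_n g'(\lam_n))$ while $g'(\lam_n)$ alternates in sign, so the target is negative for every other $n$ and cannot be met by $\E^{h}$ with $h$ entire (a zero-free real entire function has constant sign on $\R$). No choice of Weierstrass product or exponential factor repairs this.

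The culprit is the hidden normalization $\theta(z,c)\equiv1$ in your ansatz $\theta=c-\ti m s$; the correct amount of freedom is to prescribe $\theta(z,c)=\gamma(z)$, $\theta'(z,c)=\delta(z)$ and solve the Bezout identity $\gamma(z)\phi'(z,c)-\delta(z)\phi(z,c)=1$. With your $\phi=f\,u_-$ the entire functions $\phi(\,\cdot\,,c)=f$ and $\phi'(\,\cdot\,,c)=-fm_-$ have no common zeros (at $\lam_n$ the latter equals $-f'(\lam_n)\operatorname{Res}_{\lam_n}m_-\neq0$), so the identity is solvable in the ring of entire functions by Helmer's theorem --- or directly: interpolate $\gamma(\lam_n)=1/\phi'(\lam_n,c)$, a condition carrying no sign constraint, set $\delta=(\gamma\,\phi'(\,\cdot\,,c)-1)/f$, and restore realness by averaging with $z\mapsto\ol{\gamma(\ol{z})}$. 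This is also where the paper's architecture points: the lemma is quoted from \cite{kst2}, whose proof runs through exactly such a Bezout step, and the lack of growth control in general Bezout solutions is precisely why H\"ormander's corona theorem (Theorem~\ref{thm:cor}, Lemma~\ref{lem:cor}) enters in Section~\ref{sec:egr}; moreover, the solution $\phi$ itself is built in Theorem~\ref{thm:IOphiev} from canonical products over the Dirichlet \emph{and} Neumann spectra together with Krein's theorem, avoiding residue interpolation altogether. So your closing remark identifies the right later difficulty (growth control), but the bare existence argument for $\theta$ already fails as written.
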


\begin{remark}\label{rem:uniq}
It is important to point out that such a fundamental system is not unique and any other such system is given by
\[
\ti{\theta}(z,x) = \E^{-g(z)} \theta(z,x) - f(z) \phi(z,x), \qquad
\ti{\phi}(z,x) = \E^{g(z)} \phi(z,x),
\]
where $f(z)$, $g(z)$ are entire functions with $f(z)$ real and $g(z)$ real modulo $\I\pi$.
The singular Weyl functions are related via
\[
\ti{M}(z) = \E^{-2g(z)} M(z) + \E^{-g(z)}f(z).
\]
\end{remark}

We will need the following simple lemma on the high energy asymptotics of the solution $\phi(z,x)$.
 Note that we always use the principal square root with branch cut along the negative real axis.

\begin{lemma}\label{lemPhiAs}
If $\phi(z,x)$ is a real entire solution which lies in the domain of $H$ near $a$, then for every $x_0$, $x\in(a,b)$
\be\label{IOphias}
\phi(z,x) = \phi(z,x_0) \E^{(x-x_0) \sqrt{-z}} \left(1 + \OO\left(1/\sqrt{-z}\right)\right),
\ee
as $|z|\to\infty$ along any nonreal ray.
\end{lemma}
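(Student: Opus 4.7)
The plan is to factor out the expected exponential, turn the problem into a Volterra integral equation, and solve it by a standard bootstrap. Setting $k = \sqrt{-z}$ with the principal branch, one has $\re(k) > 0$ on any nonreal ray. I would write
\[
\phi(z,x) = \phi(z,x_0)\, \E^{k(x-x_0)}\, r(z,x),
\]
and substitute into $\tau \phi = z\phi$ to derive $r'' + 2k\, r' = q\, r$, with $r(z,x_0) = 1$ and $r'(z,x_0) = \mu(z) - k$, where $\mu(z) := \phi'(z,x_0)/\phi(z,x_0)$. Integrating the equation for $r'$ with the factor $\E^{2kx}$ and then integrating once more in $x$ (applying Fubini) yields the Volterra representation
\[
r(z,x) = 1 + \frac{\mu(z) - k}{2k}\bigl(1 - \E^{-2k(x-x_0)}\bigr) + \int_{x_0}^x \frac{1 - \E^{-2k(x-t)}}{2k}\, q(t)\, r(z,t)\, dt.
\]

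For $x > x_0$ the kernel $(1 - \E^{-2k(x-t)})/(2k)$ is bounded in modulus by $1/|k|$ since $\re(k) > 0$. Provided one has the initial-value estimate $\mu(z) = k + \OO(1)$, a bootstrap on $\sup_{t \in [x_0,x]} |r(z,t)|$ combined with $q \in L^1([x_0,x])$ gives $r(z,x) = 1 + \OO(1/k)$, uniformly on compact subsets of $(a,b)$. The range $x < x_0$ is handled identically by integrating in the opposite direction, which produces the same conclusion since the sign of $\re(k)$ still controls the relevant exponential factors.

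The main obstacle is supplying the bound $\mu(z) = k + \OO(1)$ (any $o(k)$ bound already gives the leading-order shape, but the sharper estimate is needed to reach the $\OO(1/\sqrt{-z})$ claimed in the statement). The key observation is that for nonreal $z$ the solution $\phi(z,\cdot)$ is proportional to the Weyl solution $u_-(z,\cdot)$ at $a$, since both lie in the (essentially one-dimensional) space of solutions in the domain of $H$ near $a$. Consequently $\mu(z)$ equals, up to a sign, the Weyl--Titchmarsh $m$-function at the regular base point $x_0$ of the half-line problem on $(a, x_0)$, and the required sharp behaviour is a classical Everitt--Atkinson high-energy asymptotic, available because $q \in L^1$ in a neighbourhood of $x_0$. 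With this input the remaining Volterra estimates are routine.
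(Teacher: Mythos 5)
Your forward-direction argument ($x>x_0$) is sound, and in substance it coincides with the paper's proof: the paper writes $\phi(z,x)=\phi(z,c)\,\big(c(z,x)-m_-(z)s(z,x)\big)$ and cites the standard asymptotics of $c(z,x)$, $s(z,x)$ and $m_-(z)$, whose proofs are precisely your Volterra bootstrap together with the Everitt-type high-energy asymptotics of the $m$-function; your identification of $\phi(z,\cdot)$ with a multiple of the Weyl solution $u_-(z,\cdot)$ is the same key step. Note, though, that this identification --- and hence $\mu(z)=\sqrt{-z}+\OO(1)$ --- is available at \emph{every} interior base point, not just at $x_0$; this matters below.

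The case $x<x_0$, however, is \emph{not} ``handled identically by integrating in the opposite direction'', and this is a genuine gap. For $x<t\le x_0$ one has $|\E^{-2k(x-t)}|=\E^{2\re(k)(t-x)}$, so with $\re(k)>0$ the kernel $(1-\E^{-2k(x-t)})/(2k)$ is exponentially \emph{large}, not bounded by $1/|k|$; likewise the inhomogeneous term $\frac{\mu-k}{2k}\big(1-\E^{2k(x_0-x)}\big)$ has modulus of order $\E^{2\re(k)(x_0-x)}/|k|$. The backward bootstrap therefore yields only $|r(z,x)|\le C\,\E^{2\re(k)(x_0-x)}$, i.e.\ control by the \emph{dominant} solution, whereas \eqref{IOphias} asserts for $x<x_0$ that $\phi$ is the recessive one. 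Indeed, no argument using only the data you feed into the Volterra equation at $x_0$ can succeed: replacing $\phi$ by $\phi+k^{-1}\chi$, where $\chi$ is a solution growing toward $a$ with $\chi(z,x_0)=\phi(z,x_0)$, leaves $\mu(z)=k+\OO(1)$ intact but changes $\phi(z,x)$ at each fixed $x<x_0$ by an exponentially dominant term, so the conclusion fails for the modified function while all your hypotheses at $x_0$ still hold. The repair is exactly the paper's one-line device: since your key observation holds with $x$ as base point, the already-proved forward case gives $\phi(z,x_0)=\phi(z,x)\,\E^{k(x_0-x)}\big(1+\OO(1/k)\big)$, and dividing yields \eqref{IOphias} for $x<x_0$; this is what the paper means by ``reversing the roles of $x_0$ and $x$'', which is legitimate precisely because the base point $c$ is arbitrary.
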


\begin{proof}
Using
\[
\phi(z,x) = \phi(z,c) ( c(z,x) - m_-(z) s(z,x) )
\]
and the well-known asymptotics of $c(z,x)$, $s(z,x)$, and $m_-(z)$ (cf.\ \cite[Lemma~9.18 and Lemma~9.19]{tschroe})
we see~\eqref{IOphias} for $x_0=c$ and $x>x_0$. The case $x<x_0$ follows after reversing the roles of $x_0$ and $x$.
Since $c$ is arbitrary, the proof is complete.
\end{proof}

Given a system of real entire solutions $\phi(z,x)$ and $\theta(z,x)$ as in Lemma~\ref{lem:pt} we can define the
singular Weyl function
\be\label{defM}
M(z) = -\frac{W(\theta(z),u_+(z))}{W(\phi(z),u_+(z))}
\ee
such that the solution which is in the domain of $H$ near $b$ (cf.\ \eqref{defupm}) is given by
\be
u_+(z,x)= a(z) \big(\theta(z,x) + M(z) \phi(z,x)\big),
\ee
where $a(z)= - W(\phi(z),u_+(z))$.
By construction we obtain that the singular Weyl function $M(z)$ is analytic in $\C\backslash\R$ and satisfies $M(z)=M(z^*)^*$.
Rather than $u_+(z,x)$ we will use
\be\label{defpsi}
\psi(z,x)= \theta(z,x) + M(z) \phi(z,x).
\ee 
Recall also from \cite[Lemma~3.2]{kst2} that associated with $M(z)$ is a corresponding spectral measure $\rho$ by virtue of
the Stieltjes--Liv\v{s}i\'{c} inversion formula
\be\label{defrho}
\frac{1}{2} \left( \rho\big((x_0,x_1)\big) + \rho\big([x_0,x_1]\big) \right)=
\lim_{\eps\downarrow 0} \frac{1}{\pi} \int_{x_0}^{x_1} \im\big(M(x+\I\eps)\big) dx.
\ee

\begin{theorem}[\cite{gz}]
Define
\be
\hat{f}(\lam) = \lim_{c\uparrow b} \int_a^c \phi(\lam,x) f(x) dx,
\ee
where the right-hand side is to be understood as a limit in $L^2(\R,d\rho)$. Then the map
\be
U: L^2(a,b) \to L^2(\R,d\rho), \qquad f \mapsto \hat{f},
\ee
is unitary and its inverse is given by
\be\label{Uinv}
f(x) = \lim_{r\to\infty} \int_{-r}^r \phi(\lam,x) \hat{f}(\lam) d\rho(\lam),
\ee
where again the right-hand side is to be understood as a limit in $L^2(a,b)$.
Moreover, $U$ maps $H$ to multiplication with $\lam$.
\end{theorem}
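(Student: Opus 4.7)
The plan is to implement the classical Weyl--Titchmarsh eigenfunction expansion, with the singular endpoint $a$ handled via the real entire solution $\phi(z,\cdot)$ from Lemma~\ref{lem:pt}. I would first record that $\phi(z,\cdot)$ lies in the domain of $H$ near $a$, that $\psi(z,\cdot)$ defined in \eqref{defpsi} lies in the domain near $b$ for $z\in\C\setminus\R$, and that $W(\phi(z),\psi(z))=-W(\theta(z),\phi(z))=-1$. Consequently the resolvent $(H-z)^{-1}$ is an integral operator on $L^2(a,b)$ with kernel
\[
G(z,x,y)=\phi(z,\min(x,y))\,\psi(z,\max(x,y)),
\]
and for real-valued $f\in C_c((a,b))$ the function $\hat f(z)=\int_a^b\phi(z,y)f(y)\,dy$ is entire in $z$.

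Substituting $\psi=\theta+M\phi$ into $\langle(H-z)^{-1}f,f\rangle$ collects the $M$-contributions into $M(z)\hat f(z)\overline{\hat f(z^*)}$, with remainder $E(z,f)$ depending only on the real entire solutions $\phi,\theta$ and therefore entire in $z$ and real on the real axis. Comparing with the spectral-theoretic identity $\langle(H-z)^{-1}f,f\rangle=\int(\lambda-z)^{-1}d\mu_f(\lambda)$ via the Stieltjes--Liv\v{s}ic inversion and \eqref{defrho} (the entire piece contributes nothing to the boundary values of the imaginary part) yields the pointwise identification $d\mu_f(\lambda)=|\hat f(\lambda)|^2\,d\rho(\lambda)$, and hence the Parseval equality
\[
\|f\|_{L^2(a,b)}^2=\int_\R|\hat f(\lambda)|^2\,d\rho(\lambda).
\]
By density of $C_c((a,b))$ in $L^2(a,b)$, the map $U$ extends to an isometry $L^2(a,b)\to L^2(\R,d\rho)$.

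For surjectivity I would introduce the candidate inverse $Vg(x)=\int\phi(\lambda,x)g(\lambda)\,d\rho(\lambda)$, first for compactly supported $g\in L^2(\R,d\rho)$, verify $V=U^*$ via Fubini together with the Parseval identity already established, and conclude $VU=\mathrm{id}$ and $UV=\mathrm{id}$ on the respective dense subspaces. The diagonalization $UHU^{-1}=\lambda$ then follows by applying $U$ to $Hf$ for $f\in\dom(H)$ with compact support in $(a,b)$ and integrating by parts twice: the eigenvalue equation $\tau\phi(\lambda,x)=\lambda\phi(\lambda,x)$ gives $\widehat{Hf}(\lambda)=\lambda\hat f(\lambda)$, with boundary terms vanishing since $f$ vanishes near both endpoints, and the identity extends to all of $\dom(H)$ by approximation.

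The most delicate step is establishing the limit definition \eqref{Uinv} and surjectivity on all of $L^2(\R,d\rho)$, equivalently the completeness of the generalized eigenfunctions $\{\phi(\lambda,\cdot)\}$ with respect to $d\rho$. Once the Parseval identity is in hand this reduces to a duality argument: the range of $U$ is closed (by isometry), and its orthogonal complement consists of $g\in L^2(\R,d\rho)$ with $\int\hat f\,\bar g\,d\rho=0$ for every $f\in C_c((a,b))$, which combined with the uniqueness of the scalar spectral measures $d\mu_f$ forces $g=0$. This last point is where the singular nature of the endpoint $a$ enters most subtly, since it is Hypothesis~\ref{hyp:gen} that guarantees the existence of a single entire solution $\phi$ suitable for serving as the generalized eigenfunction and controlling all boundary contributions at $a$.
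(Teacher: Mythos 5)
The paper offers no proof of this theorem; it is quoted from Gesztesy--Zinchenko \cite{gz} (see also Section~3 of \cite{kst2}), so your proposal must be measured against the standard proof given there. Your first half follows that route faithfully: the kernel $G(z,x,y)=\phi(z,\min(x,y))\,\psi(z,\max(x,y))$ is correctly normalized since $W(\psi,\phi)=W(\theta,\phi)=1$, the decomposition $\langle (H-z)^{-1}f,f\rangle = E(z,f)+M(z)\hat f(z)^2$ for real $f\in C_c((a,b))$ with $E(\,\cdot\,,f)$ real entire is exactly the device used in the cited proof, and Stieltjes--Liv\v{s}i\'c inversion then yields $d\mu_f=|\hat f|^2\,d\rho$ and Parseval on a dense set. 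One caution there: the delicate term in the inversion step is not the entire remainder (which you correctly dismiss) but the cross term $\re M(x+\I\eps)\,\im\bigl(\hat f(x+\I\eps)^2\bigr)$, since \eqref{defrho} controls only $\im M$ while $M$ itself, being defined only modulo real entire functions, can grow arbitrarily; one needs $\im M\ge 0$ in the upper half plane plus the standard local Herglotz estimates to see this term vanish in the limit $\eps\downarrow 0$. Your outline elides this, but it is a known technical point, not a wrong turn.

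Two steps, however, are genuinely gapped. First, surjectivity: your duality argument is circular. Knowing $\int \hat f\,\overline{g}\,d\rho=0$ for all $f\in C_c((a,b))$ and invoking ``uniqueness of the scalar spectral measures $d\mu_f$'' does not force $g=0$; that implication is equivalent to density of $\{\hat f\}$ in $L^2(\R,d\rho)$, which is precisely the claim being proved. The missing idea in the cited proof is the transform of the Green's function: for fixed $x$ and nonreal $z$ one computes $\widehat{G(z,x,\cdot)}(\lambda)=\phi(\lambda,x)(\lambda-z)^{-1}$. Then $g\perp\ran(U)$ gives $\int(\lambda-z)^{-1}\phi(\lambda,x)\overline{g(\lambda)}\,d\rho(\lambda)=0$ for all nonreal $z$; uniqueness for Borel transforms yields $\phi(\lambda,x)g(\lambda)=0$ for $\rho$-a.e.\ $\lambda$ at each fixed $x$, and since $\phi(\lambda,\cdot)$ is a nontrivial solution for every $\lambda$ (Lemma~\ref{lem:pt}), $g=0$; the same identity also delivers \eqref{Uinv} and the intertwining $U(H-z)^{-1}=(\lambda-z)^{-1}U$. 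Second, your diagonalization argument --- integrate by parts on compactly supported elements of the domain and ``extend by approximation'' --- fails whenever an endpoint is in the limit circle case, which the theorem explicitly allows: under Hypothesis~\ref{hyp:gen} the endpoint $a$ may be l.c.\ (e.g.\ Bessel operators with $-\nicefrac{1}{2}\le l<\nicefrac{1}{2}$), and $b$ may carry a boundary condition. Compactly supported domain elements are a graph core only for the minimal operator, whose closure omits the boundary conditions, so the identity $\widehat{Hf}=\lambda\hat f$ does not extend to all of the domain of $H$ by density. The resolvent intertwining above repairs this as well, since $UHU^{-1}=\lambda$ follows directly from $U(H-z)^{-1}=(\lambda-z)^{-1}U$.
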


\begin{remark}
We have seen in Remark~\ref{rem:uniq} that $M(z)$ is not unique. However, given
$\ti{M}(z)$ as in Remark~\ref{rem:uniq}, the spectral measures are related by
\[
d\ti{\rho}(\lam) = \E^{-2g(\lam)} d\rho(\lam).
\]
Hence the measures are mutually absolutely continuous and the associated spectral
transformation just differ by a simple rescaling with the positive function $\E^{-2g(\lam)}$.
\end{remark}

Finally, $M(z)$ can be reconstructed from $\rho$ up to an entire function via the following integral representation.

\begin{theorem}[\cite{kst2}]\label{IntR}
Let $M(z)$ be a singular Weyl function and $\rho$ its associated spectral measure. Then there exists
an entire function $g(z)$ such that $g(\lam)\ge 0$ for $\lam\in\R$ and $\E^{-g(\lam)}\in L^2(\R, d\rho)$.

Moreover, for any entire function $\hat{g}(z)$ such that $\hat{g}(\lam)>0$ for $\lam\in\R$ and $(1+\lam^2)^{-1} \hat{g}(\lam)^{-1}\in L^1(\R, d\rho)$
(e.g.\ $\hat{g}(z)=\E^{2g(z)}$) we have the integral representation
\be\label{Mir}
M(z) = E(z) + \hat{g}(z) \int_\R \left(\frac{1}{\lam-z} - \frac{\lam}{1+\lam^2}\right) \frac{d\rho(\lam)}{\hat{g}(\lam)},
\qquad z\in\C\backslash\sig(H),
\ee
where $E(z)$ is a real entire function.
\end{theorem}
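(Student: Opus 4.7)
The plan is to prove the two assertions separately: first (i) the existence of an admissible $g$, then (ii) the integral representation for an arbitrary admissible $\hat g$.

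For (i), the key tool is the unitarity of the spectral transform $U \colon L^2(a,b) \to L^2(\R, d\rho)$ from the Gesztesy--Zinchenko theorem stated above. For any non-trivial $f \in C_c^\infty(a,b)$, the transform $\hat{f}(\lam) := \int \phi(\lam,x) f(x)\, dx$ is entire in $\lam$ (the integrand is entire in $\lam$ and compactly supported in $x$), lies in $L^2(\R, d\rho)$ with $\|\hat{f}\|_{L^2(d\rho)} = \|f\|$, and has finite exponential order by Lemma~\ref{lemPhiAs}. Setting $\hat{f}^*(z) := \hat{f}(z^*)^*$ and $F(z) := \hat{f}(z)\,\hat{f}^*(z)$, I obtain a real entire function with $F \ge 0$ on $\R$ and $\int F\, d\rho = \|f\|^2 < \infty$. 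By choosing two generic test functions $f_1, f_2$ so that $\hat{f}_1, \hat{f}_2$ share no common real zeros (a generic property since the zero sets are discrete), the sum $F := \hat{f}_1 \hat{f}_1^* + \hat{f}_2 \hat{f}_2^*$ is real entire with $F > 0$ on $\R$ and $F \in L^1(d\rho)$. The finite-order growth of $F$ then enables a Hadamard-factorization-based construction of a real entire $g \ge 0$ on $\R$ with $\E^{-2g(\lam)} \le C\, F(\lam)$ on $\R$, yielding $\E^{-g} \in L^2(\R, d\rho)$.

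For (ii), given any $\hat{g}$ as in the hypothesis, define
\[
I(z) := \hat{g}(z) \int_\R \left( \frac{1}{\lam-z} - \frac{\lam}{1+\lam^2} \right) \frac{d\rho(\lam)}{\hat{g}(\lam)}, \qquad z \in \C \setminus \R,
\]
with convergence guaranteed by the bound $\left|\frac{1}{\lam-z} - \frac{\lam}{1+\lam^2}\right| \le C(z)/(1+\lam^2)$ uniformly in $\lam \in \R$. Both $M$ and $I$ are analytic in $\C \setminus \R$ and satisfy the reflection symmetry $M(z^*)^* = M(z)$, $I(z^*)^* = I(z)$ (since $\hat{g}$ is real entire), so the same holds for $E(z) := M(z) - I(z)$. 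The crucial step is proving that $E$ extends to an entire function. To this end, I compare distributional boundary values on $\R$: by the Stieltjes--Liv\v{s}i\'c inversion formula~\eqref{defrho}, $\tfrac{1}{\pi}\im M(\lam + \I \eps)\, d\lam$ converges weakly to $d\rho(\lam)$ as $\eps \downarrow 0$, while Plemelj's formula applied to the Cauchy-type integral (noting the $-\lam/(1+\lam^2)$ correction is independent of $z$ and so contributes no jump) yields $\tfrac{1}{\pi}\im I(\lam + \I \eps)\, d\lam \to \hat{g}(\lam) \cdot d\rho(\lam)/\hat{g}(\lam) = d\rho(\lam)$. Hence the two boundary distributions agree, so $\im E(\lam + \I \eps)\, d\lam \to 0$; combined with Schwarz reflection this forces $E$ to extend analytically across $\R$, with any atoms of $\rho$ producing cancelling simple poles in $M$ and $I$.

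The main obstacle I foresee is step (i): passing from a non-negative entire $F \in L^1(d\rho)$ to an honest exponential-of-entire majorant $\E^{-2g}$ with $g$ itself entire requires care, since $F$'s complex zeros (from its Hadamard factorization) must be absorbed into the entire exponent while preserving positivity of $g$ on the real line. Step (ii), in contrast, is largely routine once the Plemelj/Stieltjes comparison is set up as above.
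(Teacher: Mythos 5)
The paper contains no proof of Theorem~\ref{IntR} to compare against: it is imported verbatim from \cite{kst2}. Judged on its own merits, your proposal has a genuine gap in part (i) and a fillable but unaddressed omission in part (ii).

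In part (i) the Hadamard route cannot work, for two reasons. First, your claim that $\hat{f}$ has finite exponential order ``by Lemma~\ref{lemPhiAs}'' is unjustified: that lemma gives asymptotics of $\phi(z,x)$ along nonreal rays for fixed $x$, not an order bound, and in the generality of Theorem~\ref{IntR} (only Hypothesis~\ref{hyp:gen} is in force) the order of $\phi(\,\cdot\,,x)$ is finite if and only if the spectrum of $H^D_{(a,c)}$ has finite convergence exponent (Theorem~\ref{thm:IOphiev}) --- precisely the \emph{extra} hypothesis that Section~\ref{sec:egr} is devoted to, and which Theorem~\ref{IntR} does not assume. So your $F$ may have infinite order and the factorization machinery is unavailable. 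Second, even granting finite order, the step you yourself flag as the main obstacle genuinely fails as envisaged: a real entire $F$ of finite order with $F>0$ on $\R$ can be super-exponentially small along a real sequence. For instance, the genus-one canonical product with zeros $\zeta_j = j + \I\,\E^{-\E^{j}}$ and their conjugates has order one, yet $F(j) \le C\,\E^{-2\E^{j}}$, so no entire $g$ of finite order can satisfy $\E^{-2g}\le C F$ on $\R$. The rescue is that the theorem never requires $g$ to have finite order: $-\tfrac{1}{2}\log F$ is continuous on $\R$, and any continuous function on $\R$ is majorized by an entire function $g(z)=\sum_{k} a_k z^{2k}$ with $a_k\ge 0$ (choose the $a_k$ inductively), which is automatically nonnegative on $\R$; then $\E^{-2g}\le F$ on $\R$ gives $\E^{-g}\in L^2(\R,d\rho)$. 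Once you argue by majorization rather than factorization, you will notice the test functions $f_1,f_2$ (whose common-zero avoidance via a Baire argument is fine, just superfluous) are not needed at all: it suffices to majorize $\lam\mapsto \log\bigl(1+\rho\bigl((-|\lam|-1,|\lam|+1)\bigr)\bigr)+|\lam|$, since summing over the annuli $n\le|\lam|<n+1$ then yields $\int_\R \E^{-2g}\,d\rho \le \sum_n \E^{-2n}<\infty$.

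In part (ii) the architecture is right --- by \eqref{defrho} and the Plemelj computation, both $M$ and $I$ have boundary measure $\pi\,d\rho$, so the jump of $E=M-I$ vanishes weakly --- but the conclusion ``combined with Schwarz reflection this forces $E$ to extend'' is not yet a proof. To invoke any Painlev\'e/edge-of-the-wedge or Morera-type continuation you must first show that $E$ has distributional boundary values at all, i.e.\ a local temperedness bound $|E(\lam+\I\eps)|\le C_K\,\eps^{-N}$ on compacts; weak vanishing of $\im E(\lam+\I\eps)\,d\lam$ alone does not control $\re E$ near the (possibly everywhere dense in an interval) support of $\rho$, and the standard continuation theorems all presuppose such growth control. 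The bound is available but requires an argument you omit: the diagonal Green's function $G(z,x,x)=\theta(z,x)\phi(z,x)+M(z)\phi(z,x)^2$ is a Herglotz function, so $|M(\lam+\I\eps)|\,|\phi(\lam+\I\eps,x)|^2 \le C_K/\eps$ locally up to entire corrections; since for every real $\lam_0$ there is an $x$ with $\phi(\lam_0,x)\ne 0$, a compactness/varying-$x$ argument upgrades this to $|M(\lam+\I\eps)|\le C_K'/\eps$ on compacts, and $N(z)=I(z)/\hat{g}(z)$ is an honest Herglotz transform of the positive measure $d\rho/\hat{g}$ and obeys the same bound. With this temperedness step added, your boundary-value comparison does close part (ii), atoms of $\rho$ included; without it, the proof as written is incomplete.
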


\begin{remark}\label{rem:herg}
Choosing a real entire function $g(z)$ such that $\exp(-2g(\lam))$ is integrable with respect to $d\rho$, we see that
\be
M(z) = \E^{2g(z)} \int_\R \frac{1}{\lam-z} \E^{-2g(\lam)}d\rho(\lam) - E(z).
\ee
Hence if we choose $f(z) = \exp(-g(z)) E(z)$ and switch to a new system of solutions as in Remark~\ref{rem:uniq},
we see that the new singular Weyl function is a Herglotz--Nevanlinna function
\be
\ti{M}(z) = \int_\R \frac{1}{\lam-z} \E^{-2g(\lam)}d\rho(\lam).
\ee
\end{remark}

\section{Exponential growth rates}
\label{sec:egr}

While a real entire fundamental system $\theta(z,x)$, $\phi(z,x)$ as in Section~\ref{sec:swm} is sufficient to define a singular
Weyl function and an associated spectral measure, it does not suffice for the proof of our uniqueness results. 
For them we will need information on the growth order of the solutions $\theta(\,\cdot\,,x)$ and $\phi(\,\cdot\,,x)$. In the case
where $a$ is finite with a repelling potential, the growth rate will be $\nicefrac{1}{2}$ and this case was dealt
with in \cite[Section~6]{kst2}. The aim of the present section is to extend these results to cover arbitrary (finite) growth rates.
 Therefore we will say a real entire solution $\phi(z,x)$ is of growth order at most $s>0$ if both $\phi(\,\cdot\,,x)$ and $\phi'(\,\cdot\,,x)$
are of growth order at most $s$ for all $x\in(a,b)$.

Our first aim is to extend Lemma~\ref{lem:pt} and to show how the growth order of $\phi(\,\cdot\,,x)$ is connected with the convergence exponent
of the spectrum. To this end we begin by recalling some basic notation. We refer to the classical book by Levin \cite{lev} for proofs and further
background.

Given some discrete set $S\subseteq\C$, the number
\be
 \inf\biggr\lbrace s\geq0 \,\biggr|\, \sum_{\mu\in S} \frac{1}{1+|\mu|^s}<\infty \biggr\rbrace \in [0,\infty],
\ee
is called the convergence exponent of $S$. Moreover, the smallest integer $p\in\N$ for which
\be
 \sum_{\mu\in S} \frac{1}{1+|\mu|^{p+1}}<\infty
\ee
will be called the genus of $S$. Introducing the elementary factors
\be
 E_p(\zeta,z) = \left(1-\frac{z}{\zeta}\right) \exp\left(\sum_{k=1}^p\frac{1}{k} \frac{z^k}{\zeta^k}\right), \quad z\in\C,
\ee
if $\zeta\not=0$ and $E_p(0,z)=z$ we recall that the product $\prod_{\mu\in S} E_p(\mu,z)$
converges uniformly on compact sets to an entire function of growth order $s$, where $s$ and $p$ are the
convergence exponent and genus of $S$, respectively.

Furthermore, we will denote the spectrum of $H_{(a,c)}^D$ and $H_{(a,c)}^N$ (provided they are discrete) by
\be
\sig(H_{(a,c)}^D) = \{ \mu_n(c) \}_{n\in N}, \qquad \sig(H_{(a,c)}^N) = \{ \nu_{n-1}(c) \}_{n\in N},
\ee
where the index set $N$ is either $\N$ or $\Z$. The eigenvalues $\mu_n(c)$, $\nu_n(c)$ are precisely the zeros
of $\phi(\,\cdot\,,c)$ and $\phi'(\,\cdot\,,c)$, respectively. Recall that both spectra are interlacing
\be\label{IOinterlacing}
\nu_{n-1}(c) < \mu_n(c) < \nu_n(c), \qquad n\in N,
\ee
and that Krein's theorem \cite[Theorem~27.2.1]{lev} states
\be\label{eqKrein}
m_-(z) = C \prod_{j\in N} \frac{E_0(\nu_{n-1}(c),z)}{E_0(\mu_n(c),z)}, \quad C\ne 0.
\ee
Note that in general the products in the numerator and denominator will not converge independently, but
only jointly, since due to the interlacing properties of the eigenvalues, the sum
\[
\sum_{n\in N} \left( \frac{1}{\nu_{n-1}(c)} -  \frac{1}{\mu_n(c)}\right)
\]
will converge.

\begin{theorem}\label{thm:IOphiev}
For each $s>0$ the following properties are equivalent:
\begin{enumerate}
\item The spectrum of $H_{(a,c)}^D$ is purely discrete and has convergence exponent at most $s$.
\item There  is a real entire solution $\phi(z,x)$ of growth order at most $s$ which is non-trivial and lies in the domain of $H$ near $a$ for each $z\in\C$.
\end{enumerate} 
In this case $s\geq\frac{1}{2}$.
\end{theorem}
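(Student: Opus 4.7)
The implication (ii)$\Rightarrow$(i) is the easier direction, so I would dispatch it first. Pure discreteness is already supplied by Lemma~\ref{lem:pt}. The zeros of the real entire function $\phi(\,\cdot\,,c)$ are precisely the Dirichlet eigenvalues $\mu_n(c)$: each $\phi(\mu_n(c),\cdot)$ vanishes at $c$ and lies in the domain of $H$ near $a$, so it is a Dirichlet eigenfunction of $H^D_{(a,c)}$, and conversely the one-dimensionality of the space of solutions lying in the domain near $a$ forces every Dirichlet eigenfunction to be proportional to $\phi(\mu_n(c),\cdot)$. Since $\phi(\,\cdot\,,c)$ is of growth order at most $s$, Hadamard's factorization theorem (e.g.\ \cite{lev}) bounds the convergence exponent of its zero set by $s$.

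For the main direction (i)$\Rightarrow$(ii) I would extend the $s=\nicefrac{1}{2}$ argument of \cite[Section~6]{kst2} to arbitrary genus. Let $p$ denote the genus of $\{\mu_n(c)\}_{n\in N}$, so $p\leq s$, and form the canonical product $P(z):=\prod_n E_p(\mu_n(c),z)$, an entire function of order at most $s$ by Borel's theorem. By Krein's formula \eqref{eqKrein} the poles of $m_-(z)$ are simple and located exactly at the $\mu_n(c)$, so
\[
\phi(z,x) := P(z)\,u_-(z,x)
\]
is real entire in $z$, non-trivial, and lies in the domain of $H$ near $a$ (the latter since $u_-$ does). At the base point $\phi(z,c)=P(z)$ has order $\leq s$ and $\phi'(z,c)=-P(z)\,m_-(z)$. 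The growth order at arbitrary $x\in(a,b)$ then follows from the representation
\[
\phi(z,x) = \phi(z,c)\,c(z,x) + \phi'(z,c)\,s(z,x),
\]
together with the classical fact that $c(\,\cdot\,,x), s(\,\cdot\,,x)$ and their $x$-derivatives are entire of order $\nicefrac{1}{2}$ in $z$ (cf.\ \cite[Lemma~9.18]{tschroe}); this gives order at most $\max(s,\nicefrac{1}{2})$ for both $\phi(\,\cdot\,,x)$ and $\phi'(\,\cdot\,,x)$.

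Finally, the inequality $s\geq\nicefrac{1}{2}$ drops out of Lemma~\ref{lemPhiAs}: along $z=\I t$ with $t\to+\infty$ one has $\re\sqrt{-z}=\sqrt{t/2}$, and for $x\neq x_0$ the asymptotics force $|\phi(z,x)|$ to grow at least like $\exp((x-x_0)\sqrt{t/2})$, so the order of $\phi(\,\cdot\,,x)$ is at least $\nicefrac{1}{2}$. This closes the loop by upgrading $\max(s,\nicefrac{1}{2})$ to $s$ in the preceding paragraph. The principal technical obstacle is showing that $P(z)m_-(z)$ is entire of order at most $s$: the numerator and denominator products in Krein's formula converge only jointly, so one must carefully rearrange the elementary factors $E_0$ and $E_p$, absorbing the formally divergent exponential tails into a single entire correction of polynomial degree at most $p$ by invoking the interlacing identity $\sum_n\bigl(\nu_{n-1}(c)^{-k}-\mu_n(c)^{-k}\bigr)<\infty$ for $1\leq k\leq p$. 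Once this algebraic reorganization is carried out, the order bound follows from Borel's theorem on canonical products alone, and no appeal to Phragm\'en--Lindel\"of is required.
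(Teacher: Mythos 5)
Your treatment of the equivalence itself is essentially the paper's own argument in light disguise. For (ii)$\Rightarrow$(i) both you and the paper identify the zeros of $\phi(\,\cdot\,,c)$ with $\sig(H^D_{(a,c)})$ via $m_-(z)=-\phi'(z,c)/\phi(z,c)$ and bound the convergence exponent by the order. For (i)$\Rightarrow$(ii) your $\phi(z,x)=P(z)u_-(z,x)$ is exactly the paper's solution: $\phi(z,c)=P(z)=\alpha(z)$ is the canonical product over the Dirichlet eigenvalues, $\phi'(z,c)=-P(z)m_-(z)$, and your ``principal technical obstacle'' --- showing $P m_-$ is entire of order at most $s$ --- is resolved in the paper precisely as you sketch: writing $m_-=\E^{h}\ti\beta/\alpha$ with $\ti\beta$ the canonical product over the $\nu_{n-1}(c)$, comparing with Krein's formula \eqref{eqKrein} to identify $h(z)=\sum_{k=1}^p \frac{z^k}{k}\sum_n\bigl(\mu_n(c)^{-k}-\nu_{n-1}(c)^{-k}\bigr)+\log C$ as a polynomial of degree at most the genus $p$, with absolute convergence of the coefficient sums supplied by interlacing \eqref{IOinterlacing}, and then invoking Borel's theorem; no Phragm\'en--Lindel\"of is used there either. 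The propagation of the order bound from $c$ to arbitrary $x$ via $c(z,x)$, $s(z,x)$ and \cite[Lemma~9.18]{tschroe} is also the paper's step.

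The genuine gap is in your handling of $s\geq\nicefrac{1}{2}$, and it matters for the logic, not just for tidiness. First, your pointwise claim is unjustified: Lemma~\ref{lemPhiAs} controls only the \emph{ratio} $\phi(z,x)/\phi(z,x_0)$, so along $z=\I t$ you cannot conclude $|\phi(\I t,x)|\gtrsim\exp\bigl((x-x_0)\sqrt{t/2}\bigr)$ without a lower bound on $|\phi(\I t,x_0)|$ --- an entire function of order $\geq\nicefrac{1}{2}$ can perfectly well decay along a single ray, and the asymptotics leave open the alternative that $\phi(\,\cdot\,,x_0)$ decays while $\phi(\,\cdot\,,x)$ stays bounded. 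This can be repaired (apply the $\cos\pi\rho$ theorem, \cite{lev}: a nonconstant entire function of order $<\nicefrac{1}{2}$ has minimum modulus tending to infinity along a sequence of circles), but you do not do it. Second, and more seriously, even the repaired argument proves only that every solution as in (ii) has order at least $\nicefrac{1}{2}$ --- a statement about (ii) --- whereas inside the proof of (i)$\Rightarrow$(ii) you need $s\geq\nicefrac{1}{2}$ as a consequence of hypothesis (i) alone in order to ``upgrade $\max(s,\nicefrac{1}{2})$ to $s$''; using a lower bound on solution orders while trying to produce a solution of order $\leq s$ is circular. If the convergence exponent of $\sig(H^D_{(a,c)})$ could be $<\nicefrac{1}{2}$, your construction would deliver order $\leq\nicefrac{1}{2}$ but not $\leq s$, and the equivalence for $s<\nicefrac{1}{2}$ would fail unless one also knows (i) is then impossible. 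What is actually needed is a lower bound on the convergence exponent of the Dirichlet spectrum itself, an eigenvalue-counting fact (bracketing against a regular subinterval gives $\mu_n(c)=\OO(n^2)$-type bounds, hence exponent $\geq\nicefrac{1}{2}$), which is exactly what the paper imports as \cite[Lemma~6.3]{kst2} at the very start of its proof. Your asymptotics-based substitute does not deliver this spectral lower bound, so this step of your proposal has to be replaced, not merely polished.
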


\begin{proof}
Suppose the spectrum of $H_{(a,c)}^D$ is purely discrete and has convergence exponent at most $s$. The same then
holds true for the spectrum of $H_{(a,c)}^N$ and according to~\cite[Lemma~6.3]{kst2}, $s$ is at least $\nicefrac{1}{2}$.
Denote by $p\in\N_0$ the genus of these sequences and consider the real entire functions
\[
\alpha(z) = \prod_{n\in N} E_p\left(\mu_n(c),z\right) \quad\text{and}\quad \ti{\beta}(z) = \prod_{n\in N} E_p\left(\nu_{n-1}(c),z\right).
\]
Then $\alpha(z)$ and $\ti{\beta}(z)$ are of growth order at most $s$ by Borel's theorem (see \cite[Theorem~4.3.3]{lev}). Next note that
\[
m_-(z) = \E^{h(z)} \frac{\ti{\beta}(z)}{\alpha(z)}
\]
for some entire function $h(z)$ since the right-hand side has the same poles and zeros as $m_-(z)$. Comparing this with Krein's formula
\eqref{eqKrein} we obtain that $h(z)$ is in fact a polynomial of degree at most $p$:
\[
h(z) = \sum_{k=1}^p \frac{z^k}{k} \sum_{n\in N} \left( \frac{1}{\mu_n(c)^k} - \frac{1}{\nu_{n-1}(c)^k}\right) +\log(C).
\]
Observe that the sums converge absolutely by our interlacing assumption. In particular,
\be
\beta(z) = -m_-(z)\alpha(z)= - \E^{h(z)} \ti{\beta}(z)
\ee
is of growth order at most $s$ as well. Hence the solutions
\begin{align*}
 \phi(z,x) = \alpha(z)s(z,x) + \beta(z)c(z,x), \quad x\in(a,b),~z\in\C,
\end{align*}
lie in the domain of $H$ near $a$ and are of growth order at most $s$ by~\cite[Lemma~9.18]{tschroe}. 

Conversely let $\phi(z,x)$ be a real entire solution of growth order at most $s$ which lies in the domain of $H$ near $a$.
Then since $m_-(z) = -\phi'(z,c)/\phi(z,c)$, the spectrum of $H_{(a,c)}^D$ is purely discrete and coincides with the zeros of $\phi(\,\cdot\,,c)$.
Now since $\phi(\,\cdot\,,c)$ is of growth order at most $s$, its zeros are of convergence exponent at most $s$.
\end{proof}

Note that because of the interlacing property of eigenvalues, it is irrelevant what boundary condition we choose at the point $c$.
Moreover, the preceding theorem also shows that the convergence exponent of $\sigma(H_{(a,c)}^D)$ is independent of $c\in(a,b)$.
Finally, note that for (i) to hold, it suffices that there is some real entire solution $\phi(z,x)$ such that $\phi(\,\cdot\,,x)$ is of growth order at most $s$ for some $x\in(a,b)$.

Unfortunately, given a real entire solution $\phi(z,x)$ of growth order $s>0$ we are not able to prove the existence of a second solution of the same growth order.
However, at least under some additional assumptions we get a second solution $\theta(z,x)$ of growth order arbitrarily close to $s$.
For the proof we will need the following version of the corona theorem for entire functions.

\begin{theorem}[\cite{hor}]\label{thm:cor}
Let $R_s(\C)$, $s> 0$, be the ring of all entire functions $f(z)$ for which there are constants $A$, $B >0$ such that
\be
|f(z)| \le B \E^{A |z|^s}, \quad z\in\C.
\ee
Then $f_j\in R_s(\C)$, $j=1,\dots,n$ generate $R_s(\C)$ if and only if
\be
|f_1(z)| + \cdots + |f_n(z)| \ge b \E^{-a |z|^s}, \quad z\in\C,
\ee
for some constants $a$, $b>0$.
\end{theorem}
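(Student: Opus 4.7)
The plan is to prove this corona-type theorem by separating the two directions: the ``only if'' is an elementary algebraic bound, while the ``if'' direction requires Hörmander's $L^2$-estimates for the $\bar\partial$-operator with plurisubharmonic weights. For necessity, suppose $g_1,\dots,g_n\in R_s(\C)$ with $|g_j(z)|\leq B\E^{A|z|^s}$ satisfy $\sum_j f_j g_j \equiv 1$. Then
\[
1 = \biggl|\sum_{j=1}^n f_j(z) g_j(z)\biggr| \leq \Bigl(\max_j |g_j(z)|\Bigr) \sum_{j=1}^n |f_j(z)| \leq B\E^{A|z|^s} \sum_{j=1}^n |f_j(z)|,
\]
yielding the lower bound with $b = 1/B$ and $a = A$.

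For sufficiency, assume $\sum_j |f_j(z)| \geq b\E^{-a|z|^s}$ and form the smooth Koszul partition $\phi_j(z) = \overline{f_j(z)} / \sum_k |f_k(z)|^2$, which satisfies $\sum_j f_j \phi_j \equiv 1$ on $\C$. Combining the lower bound with Cauchy estimates for $f_j'$ shows that both $\phi_j$ and $\bar\partial\phi_j$ are dominated pointwise by $C\E^{C|z|^s}$. The $\phi_j$ are only smooth; to turn them into holomorphic functions, look for a correction of the form $g_j = \phi_j + \sum_k f_k u_{jk}$ with $u_{jk} = -u_{kj}$, which by antisymmetry automatically preserves the identity $\sum_j f_j g_j \equiv 1$ regardless of the $u_{jk}$. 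Since $\bar\partial f_k \equiv 0$, the holomorphy condition $\bar\partial g_j \equiv 0$ reduces, using also $\sum_k f_k \bar\partial\phi_k \equiv 0$, to the antisymmetric scalar system
\[
\bar\partial u_{jk} = \phi_j\, \bar\partial \phi_k - \phi_k\, \bar\partial\phi_j.
\]

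It remains to solve these with a bound in $R_s(\C)$, which is the analytic heart of the theorem. One invokes Hörmander's $L^2$-estimate: for any subharmonic weight $\psi$ on $\C$, the equation $\bar\partial u = v$ admits a solution satisfying
\[
\int_\C \frac{|u(z)|^2\E^{-\psi(z)}}{(1+|z|^2)^2}\, dA(z) \leq \int_\C |v(z)|^2 \E^{-\psi(z)}\, dA(z).
\]
Taking $\psi(z) = K(1+|z|^2)^{s/2}$ with $K$ large enough to absorb the exponent $C|z|^s$ on the right-hand side, the right-hand side is finite; a standard local $\bar\partial$-correction plus the sub-mean value inequality then upgrades the $L^2$-bound on $u_{jk}$ to a pointwise estimate of the form $|u_{jk}(z)| \leq C'\E^{C''|z|^s}$, whence $g_j \in R_s(\C)$. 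The main obstacle is the calibration of the weight: $\psi$ must be subharmonic, must dominate the exponential data coming from $\phi_j\bar\partial\phi_k$, and yet must produce a solution of growth order exactly $s$ rather than $s+\varepsilon$. This sharpness is precisely what Hörmander's $L^2$-estimate provides, and it is what distinguishes this finite-order corona theorem from a soft application of the Carleson--Hörmander machinery.
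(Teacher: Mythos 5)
The paper offers no proof of this theorem at all---it is quoted verbatim from H\"ormander's paper \cite{hor}---so the only meaningful comparison is with the cited source, and your proposal is in fact a correct reconstruction of H\"ormander's own argument: necessity by the trivial estimate on $1=\sum_j f_jg_j$ (which settles generation, since $R_s(\C)$ is a ring, so $1$ lying in the ideal suffices), and sufficiency via the Koszul correction $g_j=\phi_j+\sum_k f_k u_{jk}$ with antisymmetric $u_{jk}$ solving $\bar\partial u_{jk}=\phi_j\bar\partial\phi_k-\phi_k\bar\partial\phi_j$, handled by the weighted $L^2$ $\bar\partial$-estimate (H\"ormander treats general weights $p(z)$ there, with $p(z)=|z|^s$ as the case yielding $R_s(\C)$). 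All the delicate points check out: $\psi(z)=K(1+|z|^2)^{s/2}$ is indeed subharmonic for every $s>0$ (one computes $\Delta\psi=Ks(1+|z|^2)^{s/2-2}(2+s|z|^2)>0$), the Cauchy estimates give $|\bar\partial\phi_j|\le C\E^{C|z|^s}$ from the hypothesis $\sum_j|f_j|\ge b\E^{-a|z|^s}$, and the sub-mean-value upgrade of the $L^2$ bound only costs polynomial factors, which are absorbed into $\E^{\eps|z|^s}$, so the resulting $g_j$ have order $s$ and finite type as required.
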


As an immediate consequence of this result, we obtain the following necessary and sufficient criterion
for the existence of a second solution of the required type.

\begin{lemma}
Suppose $\phi(z,x)$ is a real entire solution of growth order $s>0$ and let $\eps>0$. 
Then there is a real entire second solution $\theta(\,\cdot\,,x)\in R_{s+\eps}(\C)$ with $W(\theta,\phi)=1$ if and only if 
\begin{align}\label{eqnSSphiboundlow}
   |\phi(z,c)| + |\phi'(z,c)| \geq b \E^{-a|z|^{s+\eps}}, \quad z\in\C,
\end{align}
for some constants $a$, $b>0$.
\end{lemma}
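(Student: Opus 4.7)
The proof splits naturally into two directions. For necessity, the Wronskian identity $W(\theta,\phi)=1$ evaluated at $x=c$ reads $\theta(z,c)\phi'(z,c)-\theta'(z,c)\phi(z,c) = 1$, and since $\theta(\,\cdot\,,c),\theta'(\,\cdot\,,c)\in R_{s+\eps}(\C)$ there are constants $A,B>0$ with both bounded by $B\,\E^{A|z|^{s+\eps}}$. The triangle inequality then immediately yields $1\le B\,\E^{A|z|^{s+\eps}}\bigl(|\phi(z,c)|+|\phi'(z,c)|\bigr)$, which is precisely the lower bound \eqref{eqnSSphiboundlow} with $b=1/B$ and $a=A$.

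For sufficiency, the plan is to search for $\theta$ in the form $\theta(z,x)=A(z)\,c(z,x)+B(z)\,s(z,x)$ with real entire coefficients $A, B$ to be determined. Using $c(z,c)=s'(z,c)=1$ and $c'(z,c)=s(z,c)=0$, one finds $\theta(z,c)=A(z)$ and $\theta'(z,c)=B(z)$, so the condition $W(\theta,\phi)=1$ reduces to the single Bezout-type identity $A(z)\phi'(z,c)-B(z)\phi(z,c)=1$ for $z\in\C$. Since $\phi(\,\cdot\,,c)$ and $\phi'(\,\cdot\,,c)$ lie in $R_s(\C)\subseteq R_{s+\eps}(\C)$ and the assumption \eqref{eqnSSphiboundlow} supplies exactly the hypothesis of Theorem~\ref{thm:cor} in the ring $R_{s+\eps}(\C)$, I would apply that corona theorem directly to obtain entire $A,B\in R_{s+\eps}(\C)$ solving this identity.

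Two cleanup steps remain. To ensure $\theta$ is \emph{real} entire, I would symmetrize, replacing $A(z)$ and $B(z)$ by $(A(z)+\overline{A(\bar z)})/2$ and $(B(z)+\overline{B(\bar z)})/2$; since $\phi(\,\cdot\,,c)$ and $\phi'(\,\cdot\,,c)$ are real entire, conjugating the Bezout identity shows that the symmetrization continues to solve it, and the $R_{s+\eps}$ bound is preserved. To see that $\theta(\,\cdot\,,x),\theta'(\,\cdot\,,x)\in R_{s+\eps}(\C)$ for \emph{every} $x\in(a,b)$ and not only at $x=c$, I would invoke the standard estimates on $c(z,x), s(z,x)$ and their $x$-derivatives, which are of growth order at most $\nicefrac{1}{2}\le s$ in $z$ (cf.\ \cite[Lemma~9.18]{tschroe} and Theorem~\ref{thm:IOphiev}); this bound then propagates through the ansatz to $\theta$ and $\theta'$.

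I do not anticipate a genuine obstacle. The only conceptual step is recognizing that imposing the Wronskian normalization at the reference point $c$ reduces the construction of a second solution of controlled growth to a Bezout problem in $R_{s+\eps}(\C)$ for the pair $\{\phi(\,\cdot\,,c),\phi'(\,\cdot\,,c)\}$; once this is seen, the quantitative corona theorem of H\"ormander does all the work, and the passage to a real solution and the propagation of the growth bound in $x$ are both routine.
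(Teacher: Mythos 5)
Your proof is correct and follows essentially the same route as the paper: necessity via the Wronskian identity at $x=c$ (your direct triangle-inequality bound is just the easy direction of Theorem~\ref{thm:cor}, which the paper cites instead), and sufficiency by applying H\"ormander's corona theorem to the pair $\phi(\,\cdot\,,c)$, $\phi'(\,\cdot\,,c)$ and using the resulting Bezout coefficients as initial data at $c$ --- your ansatz $\theta=A\,c(z,x)+B\,s(z,x)$ is exactly the paper's prescription of initial conditions. Your two cleanup steps (symmetrizing to make $\theta$ real, and propagating the $R_{s+\eps}$ bound to all $x$ via the order-$\nicefrac{1}{2}$ estimates on $c(z,x)$, $s(z,x)$) are details the paper leaves implicit, and both are handled correctly.
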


\begin{proof}
If $\theta(z,x)$ is a second solution which lies in $R_{s+\eps}(\C)$, then
\begin{align*}
   \theta(z,c) \phi'(z,c) - \theta'(z,c) \phi(z,c) = 1, \quad z\in\C,
\end{align*}
implies that the functions $\phi(\,\cdot\,,c)$ and $\phi'(\,\cdot\,,c)$ generate $R_{s+\eps}(\C)$ and~\eqref{eqnSSphiboundlow} follows from Theorem~\ref{thm:cor}.
Conversely if~\eqref{eqnSSphiboundlow} holds, then $\phi(\,\cdot\,,c)$ and $\phi'(\,\cdot\,,c)$ generate $R_{s+\eps}(\C)$. 
Thus there are real entire functions $\gamma$, $\delta\in R_{s+\eps}(\C)$ with
\begin{align*}
   \gamma(z) \phi'(z,c) - \delta(z) \phi(z,c) = 1, \quad z\in\C.
\end{align*}
Now take $\theta(z,x)$  to be the solutions with initial conditions $\theta(z,c)=\gamma(z)$ and $\theta'(z,c)=\delta(z)$. 
\end{proof}

We are also able to provide a sufficient condition for a second solution of order to exist, in terms of the zeros of $\phi(\,\cdot\,,c)$ and $\phi'(\,\cdot\,,c)$.
For the proof we need the following lemma on the minimal modulus of an entire function of finite growth order.

\begin{lemma}\label{lemesthp}
Suppose $F(z)$ is an entire function of growth order $s$ with zeros $\zeta_j$, $j\in\N$. Then for each $\delta$, $\eps>0$ there are constants $A$, $B>0$ such that
\begin{align}\label{estcp}
\left|F(z) \right| \ge B \E^{-A |z|^{s+\eps}},
\end{align}
except possibly when $z$ belongs to one of the disks $|z-\zeta_j|<|\zeta_j|^{-\delta}$.
\end{lemma}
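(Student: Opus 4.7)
The natural approach is via the Poisson--Jensen formula, the standard device for lower bounds of entire functions of finite order. After factoring out $z^m$ if $F(0)=0$ (which only affects the bounds near the origin and can be absorbed into the constants), one may assume $F(0)\neq 0$. For $|z|<R$ the formula reads
\[
\log|F(z)| = \frac{1}{2\pi}\int_0^{2\pi}\log|F(R\E^{\I\theta})|\,P(z,R\E^{\I\theta})\,d\theta - \sum_{|\zeta_j|<R}\log\left|\frac{R^2-\overline{\zeta_j}z}{R(z-\zeta_j)}\right|,
\]
with $P$ the Poisson kernel. I would take $R=2|z|$, so that $P$ is uniformly bounded and the Blaschke-type factors appearing in the sum have modulus at least one (hence each summand is nonnegative). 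It therefore suffices to bound the Poisson integral from below and the sum from above.

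For the Poisson integral, I would split $\log|F|=\log^+|F|-\log^-|F|$. The positive part is controlled by the order hypothesis $\log M(R)\leq CR^{s+\eps/2}$, valid for $R$ large since $F$ has order $s$, and Jensen's formula bounds $\int\log^-|F(R\E^{\I\theta})|\,d\theta/(2\pi)$ by the same quantity (up to $|\log|F(0)||$). This yields a lower bound on the Poisson integral of the form $-C_1|z|^{s+\eps/2}$.

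For the sum over zeros, outside the exclusion disks one has $|z-\zeta_j|\geq|\zeta_j|^{-\delta}$, while $|R^2-\overline{\zeta_j}z|\leq 2R^2$ trivially. Each summand is then at most $C(\log|z|+\delta\log|\zeta_j|)\leq C'\log|z|$ for $|\zeta_j|<R=2|z|$, and the number of summands $n(R)$ is bounded by $C_2R^{s+\eps/2}$, since for an entire function of order $s$ the convergence exponent of its zeros is at most $s$. The total sum is thus at most $C_3|z|^{s+\eps/2}\log|z|\leq|z|^{s+\eps}$ for $|z|$ large, and combining both bounds yields the claim.

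The main technical point is uniformity: near the origin the exclusion disks $|z-\zeta_j|<|\zeta_j|^{-\delta}$ can be large for small $|\zeta_j|$. The convergence-exponent hypothesis places only finitely many zeros in any bounded region, however, so the contribution of these small-modulus zeros can be absorbed into the constants $A$, $B$. A secondary subtlety is that all intermediate estimates should use a common loss $\eps/2$ in the exponent, which is easily arranged by bookkeeping; everything is then packaged into the final constants $A,B>0$ on the right-hand side of \eqref{estcp}.
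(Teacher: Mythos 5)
Your proof is correct, but it takes a genuinely different route from the paper's. The paper disposes of the lemma in one line: by Hadamard's factorization theorem, $F(z)=z^m\E^{g(z)}P(z)$ with $g$ a polynomial of degree at most $s$ and $P$ a canonical product built from the $\zeta_j$, so that $|\E^{g(z)}|\geq\E^{-C|z|^{s}}$ trivially, and the minimum-modulus theorem for canonical products (Boas, Lemma~2.6.18, whose proof splits the product at $|\zeta_j|\leq 2|z|$ and estimates the elementary factors $E_p$ individually) supplies the bound \eqref{estcp} outside precisely the stated disks. You instead reprove that minimum-modulus estimate from scratch via Poisson--Jensen with $R=2|z|$, and all the steps check out: the kernel is pinched between $\nicefrac{1}{3}$ and $3$, the Blaschke-type factors have modulus at least one so the zero sum only hurts you, Jensen's formula converts the order bound $\log M(R)\leq CR^{s+\eps/2}$ into control of $\int\log^{-}|F|$ up to $|\log|F(0)||$, each summand is at most $(1+\delta)\log(2R)$ outside the exclusion disks (note the case $|\zeta_j|<1$ is even better, since then $|\zeta_j|^{-\delta}>1$), and $n(2|z|)=\OO(|z|^{s+\eps/2})$ follows from the convergence exponent or directly from Jensen; the bounded-$|z|$ region is then handled by compactness because each zero sits at the center of its own excluded disk. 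What your approach buys is self-containedness — no canonical products or genus bookkeeping, only Jensen's formula and the definition of order; what the paper's citation buys is brevity. One shared caveat worth noting: if $F(0)=0$, the inequality \eqref{estcp} cannot hold in any punctured neighborhood of the origin (the left side tends to $0$, the right side to $B>0$), so the exceptional disk at a zero $\zeta_j=0$ must be interpreted suitably or one restricts to large $|z|$, which is all the paper ever uses; your factoring-out of $z^m$ is the right instinct, but the claim that this can simply be "absorbed into the constants" glosses over this point, which belongs to the statement of the lemma rather than to your argument.
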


\begin{proof}
This follows from Hadamard's factorization theorem and \cite[Lemma~2.6.18]{boas}.
\end{proof}

\begin{lemma}\label{lem:cor}
Suppose $\phi(z,x)$ is a real entire solution of growth order $s>0$ and that for some $r>0$ all but finitely many of the disks given by
\be\label{IOestmunu}
|z-\mu_n(c)|<|\mu_n(c)|^{-r} \quad\text{and}\quad |z-\nu_{n-1}(c)|<|\nu_{n-1}(c)|^{-r}, \quad n\in N,
\ee
are disjoint. Then for every $\eps>0$ there is a real entire second solution $\theta(z,x)$ of growth order at most $s+\eps$ and $W(\theta,\phi)=1$.
\end{lemma}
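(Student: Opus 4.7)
The plan is to verify the exponential lower bound \eqref{eqnSSphiboundlow} from the preceding lemma, which then directly yields the required second solution $\theta(z,x)$ of order at most $s+\eps$. So the whole task is to show that
\begin{equation*}
|\phi(z,c)| + |\phi'(z,c)| \geq b\,\E^{-a|z|^{s+\eps}}, \qquad z\in\C,
\end{equation*}
holds for some constants $a,b>0$.

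First I would apply Lemma~\ref{lemesthp} separately to the two entire functions $\phi(\,\cdot\,,c)$ and $\phi'(\,\cdot\,,c)$, both of growth order at most $s$, whose zero sets are precisely $\{\mu_n(c)\}_{n\in N}$ and $\{\nu_{n-1}(c)\}_{n\in N}$ respectively. Taking $\delta = r$ in that lemma supplies constants $A,B>0$ for which
\begin{equation*}
|\phi(z,c)| \geq B\,\E^{-A|z|^{s+\eps}} \quad\text{and}\quad |\phi'(z,c)| \geq B\,\E^{-A|z|^{s+\eps}}
\end{equation*}
hold outside the unions of the $\mu$-disks and $\nu$-disks from \eqref{IOestmunu}, respectively.

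Next I would use the disjointness hypothesis to patch these two one-sided bounds together. Let $F$ denote the union of the finitely many disks from \eqref{IOestmunu} that fail to be disjoint from all the others; since only finitely many indices are involved and the disks have bounded radius, $F$ is a bounded subset of $\C$. For any $z\in\C\setminus F$, a point $z$ cannot lie simultaneously in a $\mu$-disk and a $\nu$-disk by the disjointness assumption, so at least one of $|\phi(z,c)|$, $|\phi'(z,c)|$ inherits the exponential lower bound above. This gives the desired estimate on $\C\setminus F$.

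It remains to handle the bounded set $F$, and this is the only genuinely delicate step: exponential decay from Lemma~\ref{lemesthp} is no longer guaranteed there, so one must argue differently. The key observation is that on the compact closure $\overline{F}$ the continuous function $|\phi(z,c)| + |\phi'(z,c)|$ cannot vanish, for if both $\phi(z_0,c)$ and $\phi'(z_0,c)$ were zero at some $z_0\in\overline{F}$, then by uniqueness of solutions of the linear ODE $\tau u = z_0 u$ we would have $\phi(z_0,\,\cdot\,) \equiv 0$, contradicting the non-triviality of $\phi(z_0,\,\cdot\,)$. Hence the sum attains a positive minimum on $\overline{F}$, and after decreasing $b$ if necessary this supplies the required lower bound on all of $\C$. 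Applying the preceding lemma concludes the proof.
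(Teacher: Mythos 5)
Your proof is correct and follows essentially the same route as the paper's: both apply Lemma~\ref{lemesthp} to $\phi(\,\cdot\,,c)$ and $\phi'(\,\cdot\,,c)$ with $\delta=r$, use the disjointness of the disks in \eqref{IOestmunu} to guarantee that at least one of the two lower bounds holds at each point, and then conclude via the criterion \eqref{eqnSSphiboundlow} of the preceding corona-type lemma. Your additional compactness step on the bounded exceptional set $F$ --- noting that $\phi(z_0,c)=\phi'(z_0,c)=0$ would force $\phi(z_0,\,\cdot\,)\equiv 0$ by ODE uniqueness --- correctly fills in a detail that the paper's one-line proof leaves implicit.
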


\begin{proof}
Lemma~\ref{lemesthp} implies that we have~\eqref{estcp} for either $\phi(\,\cdot\,,c)$ or $\phi'(\,\cdot\,,c)$ and hence in particular for the sum of both.
\end{proof}

\begin{remark}\label{rem:uniqExp}
By the Hadamard product theorem~\cite[Theorem~4.2.1]{lev}, a solution $\phi(z,x)$ of growth order $s>0$ is unique up to a factor $\E^{g(z)}$,
for some polynomial $g(z)$ real modulo $\I\pi$ and of degree at most $p$, where $p\in\N_0$ is the genus of the eigenvalues of $H_{(a,c)}^D$.
A solution $\theta(z,x)$ of growth order at most $s$ is unique only up to $f(z) \phi(z,x)$, where $f(z)$ is an entire function of growth order at most $s$.
\end{remark}

Finally, note that under the assumptions in this section one can use $\hat{g}(z)=\exp(z^{2\ceil{(p+1)/2}})$ in Theorem~\ref{IntR}.
If in addition $H$ is bounded from below, then one can also use $\hat{g}(z)=\exp(z^{p+1})$.

\section{A local Borg--Marchenko uniqueness result}
\label{sec:lbmt}

The purpose of the present section is again to extend the corresponding results from \cite[Section~7]{kst2} to the case of arbitrary
growth orders.

\begin{lemma}[\cite{kst2}]\label{lemAsymM}
For each $x\in(a,b)$, the singular Weyl function $M(z)$ and the Weyl solution $\psi(z,x)$ defined in~\eqref{defpsi}
have the following asymptotics:
\begin{align}\label{asymM}
M(z) &= -\frac{\theta(z,x)}{\phi(z,x)} + \OO\left(\frac{1}{\sqrt{-z}\phi(z,x)^2}\right),\\ \label{asympsi}
\psi(z,x) &= \frac{1}{2\sqrt{-z} \phi(z,x)} \left( 1 + \OO\left(\frac{1}{\sqrt{-z}}\right) \right),
\end{align}
as $|z|\to\infty$ in any sector $|\im(z)| \geq \delta\, |\re(z)|$.
\end{lemma}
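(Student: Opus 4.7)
The plan is to reduce both asymptotics to a computation at the base point $c$ and then transport the result to arbitrary $x\in(a,b)$ using the exponential asymptotics of $\phi$ and $\psi$ along nonreal rays.

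First, \eqref{defpsi} directly gives the algebraic identity
\[
M(z) = -\frac{\theta(z,x)}{\phi(z,x)} + \frac{\psi(z,x)}{\phi(z,x)},
\]
so \eqref{asymM} follows from \eqref{asympsi} once the latter is established. For \eqref{asympsi}, I would use that $\psi(z,x) = u_+(z,x)/a(z)$ with $a(z) = -W(\phi(z),u_+(z))$ and evaluate the Wronskian at $x=c$. Since $u_+(z,c) = 1$ and $u_+'(z,c) = m_+(z)$, one gets $a(z) = \phi'(z,c) - \phi(z,c)\,m_+(z)$. On the other hand $\phi(\,\cdot\,,x)$ is, near $a$, a scalar multiple of $u_-(\,\cdot\,,x)$ and therefore $\phi'(z,c)/\phi(z,c) = -m_-(z)$. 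The standard asymptotics $m_\pm(z) = \mp\sqrt{-z}(1+\OO(1/\sqrt{-z}))$ in the sector $|\im(z)|\ge\delta|\re(z)|$ (Lemma~9.19 in \cite{tschroe}) then yield
\[
a(z) = \phi(z,c)\bigl(-m_-(z)-m_+(z)\bigr) = 2\sqrt{-z}\,\phi(z,c)\bigl(1+\OO(1/\sqrt{-z})\bigr),
\]
which gives \eqref{asympsi} at $x=c$ after taking reciprocals.

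To pass from $x=c$ to an arbitrary $x\in(a,b)$, I would invoke Lemma~\ref{lemPhiAs} for $\phi$ and its exact analog for $\psi$. Since $\psi(\,\cdot\,,x)$ lies in the domain of $H$ near $b$ it is proportional to $u_+(\,\cdot\,,x)$, and the proof of Lemma~\ref{lemPhiAs} carries over verbatim (with the roles of $a$ and $b$, i.e.\ of $u_-$ and $u_+$, interchanged) to give
\[
\psi(z,x) = \psi(z,c)\,\E^{-(x-c)\sqrt{-z}}\bigl(1+\OO(1/\sqrt{-z})\bigr),
\]
while Lemma~\ref{lemPhiAs} itself provides $1/\phi(z,x) = \E^{-(x-c)\sqrt{-z}}/\phi(z,c)\,(1+\OO(1/\sqrt{-z}))$. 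Multiplying these two relations by the previously established asymptotic for $\psi(z,c)$ produces \eqref{asympsi} for general $x$, and substituting into the identity for $M$ produces \eqref{asymM}.

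The routine obstacle will be making the analog of Lemma~\ref{lemPhiAs} for $\psi$ explicit, but since it follows by the same symmetry-of-endpoints argument (or can be derived directly from the representation $u_+(z,x) = u_+(z,c)(c(z,x)+m_+(z)s(z,x))$ together with the known asymptotics of $c(z,x)$, $s(z,x)$ and $m_+(z)$), no genuinely new input is required. The only place where the sector condition $|\im(z)|\ge\delta|\re(z)|$ is actually used is in the asymptotics of $m_\pm$; once those are in force, everything else is purely algebraic bookkeeping of error terms.
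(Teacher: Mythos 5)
Your proposal is correct in substance, and it is essentially the standard argument behind this lemma (which the paper does not reprove but quotes from \cite{kst2}): express $a(z)=-W(\phi(z),u_+(z))$ through the Weyl $m$-functions, obtain \eqref{asympsi} from the $m$-function asymptotics, and deduce \eqref{asymM} from the algebraic identity $M(z)=-\theta(z,x)/\phi(z,x)+\psi(z,x)/\phi(z,x)$. A marginally more direct variant evaluates the Wronskian at the point $x$ itself, writing $W(\phi,u_+)=\phi(z,x)u_+(z,x)\bigl(u_+'(z,x)/u_+(z,x)-\phi'(z,x)/\phi(z,x)\bigr)$ and using the $m$-functions with base point $x$; that yields \eqref{asympsi} at every $x$ in one stroke and avoids your separate transport step. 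Your transport route is equally valid, and you are right that the analog of Lemma~\ref{lemPhiAs} for $u_+$ needs the same ``reversing roles'' device as in the paper's proof of Lemma~\ref{lemPhiAs}: for $x>c$ the naive substitution $u_+=c+m_+s$ suffers leading-order cancellation, so one must normalize at $x$ and evaluate at $c$.

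One concrete inconsistency you should fix: with the paper's sign convention $u_-(z,x)=c(z,x)-m_-(z)s(z,x)$ from \eqref{defupm}, \emph{both} Weyl functions satisfy $m_\pm(z)=-\sqrt{-z}\,(1+\OO(1/\sqrt{-z}))$ in the sector; the asymptotics $m_\pm(z)=\mp\sqrt{-z}\,(1+\OO(1/\sqrt{-z}))$ you quote belong to the convention in which $m_-$ is the plain logarithmic derivative $u_-'(z,c)/u_-(z,c)$. As literally written, substituting your quoted asymptotics into your own display $a(z)=\phi(z,c)\bigl(-m_-(z)-m_+(z)\bigr)$ produces leading-order cancellation, $-\sqrt{-z}+\sqrt{-z}$, rather than $2\sqrt{-z}$. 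In the paper's convention one has $-m_-(z)=\phi'(z,c)/\phi(z,c)=\sqrt{-z}\,(1+\OO(1/\sqrt{-z}))$ --- consistent with the growth $\phi(z,x)=\phi(z,c)\E^{(x-c)\sqrt{-z}}(1+\OO(1/\sqrt{-z}))$ from Lemma~\ref{lemPhiAs} --- and $-m_+(z)=\sqrt{-z}\,(1+\OO(1/\sqrt{-z}))$, so the two leading terms add and $a(z)=2\sqrt{-z}\,\phi(z,c)(1+\OO(1/\sqrt{-z}))$ as you conclude; the slip is thus a misquoted convention, not a flaw in the structure. Finally, since the lemma asserts the asymptotics uniformly in sectors $|\im(z)|\geq\delta\,|\re(z)|$ and not merely along individual rays, you should note that the underlying asymptotics of $c(z,x)$, $s(z,x)$ and $m_\pm(z)$ (\cite[Lemmas~9.18 and 9.19]{tschroe}) hold uniformly in such sectors, so your transport step inherits this uniformity.
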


In particular, \eqref{asymM} shows that asymptotics of $M(z)$ immediately follow once one
has corresponding asymptotics for the solutions $\theta(z,x)$ and $\phi(z,x)$. Moreover, the leading
asymptotics depend only on the values of $q$ near the endpoint $a$ (and on the choice of $\theta(z,x)$ and $\phi(z,x)$). 
The following Borg--Marchenko type uniqueness result shows that the converse is also true.

In order to state this theorem, let $q_0$ and $q_1$ be two potentials on intervals $(a,b_0)$ and $(a,b_1)$, respectively.
By $H_0$ and $H_1$ we denote some corresponding self-adjoint operators with separated boundary conditions.
Furthermore, for $j=0,1$, let $\theta_j(z,x)$, $\phi_j(z,x)$ be some real entire fundamental system of solutions with
$W(\theta_j,\phi_j)=1$ such that $\phi_j(z,x)$ lies in the domain of $H_j$ near $a$. 
The associated singular Weyl functions are denoted by $M_0(z)$ and $M_1(z)$. We will also use
the common short-hand notation $\phi_1(z,x) \sim \phi_2(z,x)$ to abbreviate the asymptotic relation
$\phi_1(z,x) = \phi_2(z,x) (1 + \oo(1))$ (or equivalently $\phi_2(z,x) = \phi_1(z,x) (1 + \oo(1))$) as $|z|\to\infty$
in some specified manner.

\begin{theorem}\label{thmbm}
Suppose $\theta_0(z,x)$, $\theta_1(z,x)$, $\phi_0(z,x)$, $\phi_1(z,x)$ are of growth order at most $s$ for some $s>0$ and $\phi_1(z,x) \sim \phi_0(z,x)$ for one (and hence by~\eqref{IOphias} for all) $x\in(a,b_0)\cap(a,b_1)$ as $|z|\to\infty$ along some nonreal rays dissecting the complex plane into sectors of opening angles less than $\nicefrac{\pi}{s}$.
Then for each $c\in(a,b_0)\cap(a,b_1)$, the following properties are equivalent:
\begin{enumerate}
 \item We have $q_0(x) = q_1(x)$ for almost all $x\in(a,c)$ and $W(\phi_0,\phi_1)(a)=0$.
 \item For each $\delta>0$ there is an entire function $f(z)$ of growth order at most $s$ such that
  \begin{align*}M_1(z)-M_0(z) = f(z) + \OO\left(\frac{1}{\sqrt{-z} \phi_0(z,c)^{2}}\right),\end{align*}
   as $|z|\rightarrow\infty$ in the sector $|\im(z)|\geq \delta\,|\re(z)|$.
 \item For each $d\in(a,c)$ there is an entire function $f(z)$ of growth order at most $s$ such that
  \begin{align*}M_1(z)-M_0(z) = f(z) + \OO\left(\frac{1}{\phi_0(z,d)^{2}}\right),\end{align*}
   as $|z|\rightarrow\infty$ along our nonreal rays.
\end{enumerate}
\end{theorem}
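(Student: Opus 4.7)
The plan is to establish (i)$\Rightarrow$(ii)$\Rightarrow$(iii)$\Rightarrow$(i), following the three-step strategy of \cite[Section~7]{kst2}, but upgrading every application of Phragm\'en--Lindel\"of from the order $s=\nicefrac{1}{2}$ setting to arbitrary $s>0$; this is precisely where the hypothesis on the opening angles of the sectors ($<\nicefrac{\pi}{s}$) is used.

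For (i)$\Rightarrow$(ii), the assumption $q_0=q_1$ on $(a,c)$ makes both $\phi_0(z,\,\cdot\,)$ and $\phi_1(z,\,\cdot\,)$ solutions of the same Schr\"odinger equation on $(a,c)$, so $W(\phi_0,\phi_1)$ is $x$-independent there and vanishes at $a$ by hypothesis. Hence $\phi_1(z,x)=h(z)\phi_0(z,x)$ on $(a,c)$ for some entire $h(z)$ of growth order at most $s$, and the asymptotic $\phi_1\sim\phi_0$ forces $h(z)\to 1$ along each of our rays. Phragm\'en--Lindel\"of in each sector of opening less than $\nicefrac{\pi}{s}$, combined with Liouville, then yields $h\equiv 1$. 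Consequently $\theta_1-\theta_0=f(z)\phi_0$ on $(a,c)$ for some entire $f$ of growth order at most $s$, and inserting this into \eqref{asymM} at $x=c$ produces the expansion in (ii) (with the entire function $-f$).

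The implication (ii)$\Rightarrow$(iii) is essentially automatic: restrict to the chosen rays (which are nonreal and hence lie in a sector $|\im(z)|\geq\delta|\re(z)|$ for some $\delta>0$), and invoke Lemma~\ref{lemPhiAs} to note that $|\phi_0(z,c)|^2\sim|\phi_0(z,d)|^2\E^{2(c-d)\re\sqrt{-z}}$, so the error $\OO(1/(\sqrt{-z}|\phi_0(z,c)|^2))$ is dominated by $\OO(1/|\phi_0(z,d)|^2)$ with the same $f$, for every $d\in(a,c)$.

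The core work lies in (iii)$\Rightarrow$(i). Introduce
\[
G(z,x) := \theta_1(z,x)\phi_0(z,x) - \theta_0(z,x)\phi_1(z,x) - f(z)\phi_0(z,x)\phi_1(z,x),
\]
which is entire in $z$ of growth order at most $s$ for each fixed $x$, and check the identity
\[
G(z,x) + \phi_0(z,x)\phi_1(z,x)\bigl(M_1(z)-M_0(z)-f(z)\bigr) = \phi_0(z,x)\psi_1(z,x) - \phi_1(z,x)\psi_0(z,x).
\]
Fix $d^*\in(a,c)$ and pick $d_2\in(d^*,c)$. Applying (iii) at $d=d_2$ and using Lemma~\ref{lemPhiAs} to estimate $|\phi_0(z,d^*)\phi_1(z,d^*)|/|\phi_0(z,d_2)|^2=\OO(\E^{-2(d_2-d^*)\re\sqrt{-z}})$ shows that the middle term is $\oo(1)$ along our rays; combined with $\phi_0\psi_1-\phi_1\psi_0=\oo(1/\sqrt{-z})$ (which follows from \eqref{asympsi} together with $\phi_0\sim\phi_1$), this gives $G(z,d^*)=\oo(1)$ on each ray. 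Phragm\'en--Lindel\"of in each sector of opening less than $\nicefrac{\pi}{s}$ bounds $G(\,\cdot\,,d^*)$ on all of $\C$, Liouville renders it a constant, and the decay on any single ray pins that constant at $0$. Thus $G\equiv 0$ on $\C\times(a,c)$, i.e.\ $\theta_1\phi_0-\theta_0\phi_1=f\phi_0\phi_1$ throughout. Rewriting this as $(\theta_1-f\phi_1)/\phi_1=\theta_0/\phi_0$ at regular points and differentiating in $x$ using $W(\theta_j,\phi_j)=1$ forces $1/\phi_1^2=1/\phi_0^2$, hence $\phi_1=\pm\phi_0$ on $(a,c)$; the $\sim$ asymptotic picks out the $+$ sign. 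Subtracting the two Schr\"odinger equations for $\phi_0\equiv\phi_1$ then yields $(q_0-q_1)\phi_0=0$ and therefore $q_0=q_1$ almost everywhere on $(a,c)$, while $W(\phi_0,\phi_1)(a)=W(\phi_0,\phi_0)(a)=0$ is automatic.

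The main obstacle is ensuring that the error in (iii) actually becomes $\oo(1)$ (not merely $\OO(1)$) along the rays, so that the Liouville step forces $G\equiv 0$ rather than just constant. This is why we evaluate $G$ at $d^*$ strictly smaller than the $d_2$ used in (iii): the exponential gap supplied by Lemma~\ref{lemPhiAs} replaces the extra $1/\sqrt{-z}$ factor that was directly available in the order-$\nicefrac{1}{2}$ regime of \cite{kst2} and accommodates arbitrary growth orders, at the (cosmetic) cost of only treating points strictly inside $(a,c)$.
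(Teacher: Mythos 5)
Your proposal is correct and follows essentially the same route as the paper's own proof: the same chain (i)$\Rightarrow$(ii)$\Rightarrow$(iii)$\Rightarrow$(i), the same auxiliary entire function $G$ rewritten via the Weyl solutions, decay along the rays, Phragm\'en--Lindel\"of in the sectors of opening less than $\nicefrac{\pi}{s}$, and Liouville --- you merely make explicit two steps the paper leaves terse, namely the implication (ii)$\Rightarrow$(iii) and the choice of $d_2$ strictly to the right of the evaluation point $d^*$ so that the Weyl-function error term is genuinely $\oo(1)$ (the paper's Remark~\ref{rem:uniqExp}/Hadamard step in (i)$\Rightarrow$(ii) is replaced by your equivalent Wronskian-plus-Phragm\'en--Lindel\"of argument for $h$). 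One harmless sign slip: with your $G=\theta_1\phi_0-\theta_0\phi_1-f\phi_0\phi_1$ the displayed identity actually reads $G+\phi_0\phi_1\bigl(M_1(z)-M_0(z)+f(z)\bigr)=\phi_0\psi_1-\phi_1\psi_0$, whose correction term is $2f+\oo(1)$ rather than small, so you should define $G$ with $+f\phi_0\phi_1$ (equivalently replace $f$ by $-f$ throughout); since $f$ is an arbitrary entire function and the subsequent differentiation in $x$ eliminates it, the rest of your argument is unaffected.
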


\begin{proof}
If (i) holds, then by Remark~\ref{rem:uniqExp} the solutions are related by
\begin{align}\label{eqnLBMsolPHI}
 \phi_1(z,x) = \phi_0(z,x)\E^{g(z)}, \quad x\in(a,c],~ z\in\C,
\end{align}
and
\begin{align}\label{eqnLBMsolTHETA}
 \theta_1(z,x) = \theta_0(z,x) \E^{-g(z)} -f(z)\phi_1(z,x), \quad x\in(a,c],~z\in\C,
\end{align}
 for some polynomial $g(z)$ of degree at most $s$ and some real entire function $f(z)$ of growth order at most $s$. 
 From the asymptotic behavior of the solutions $\phi_0(z,x)$, $\phi_1(z,x)$ we infer that $g=0$.
 Now the asymptotics in Lemma~\ref{lemAsymM} show that
 \begin{align*}
  M_1(z) - M_0(z) & = \frac{\theta_0(z,c)}{\phi_0(z,c)} - \frac{\theta_1(z,c)}{\phi_1(z,c)} + \OO\left(\frac{1}{\sqrt{-z}\phi_0(z,c)^{2}}\right) \\
                  & = f(z) + \OO\left(\frac{1}{\sqrt{-z}\phi_0(z,c)^{2}}\right),
 \end{align*}
 as $|z|\to\infty$ in any sector $|\im(z)| \ge \delta\, |\re(z)|$.
The implication (ii) $\Rightarrow$ (iii) is obvious.
Now suppose property (iii) holds and for each fixed $x\in(a,c)$ consider the entire function
\begin{align}\label{eqnthmbmentfun}
 G_x(z) = \phi_1&(z,x)  \theta_0(z,x) - \phi_0(z,x) \theta_1(z,x) - f(z)\phi_0(z,x) \phi_1(z,x), \quad z\in\C.
\end{align}
Since away from the real axis this function may be written as
\begin{align*}
 G_x(z) & = \phi_1(z,x) \psi_0(z,x) - \phi_0(z,x) \psi_1(z,x) \\
      & \qquad\qquad + (M_1(z)-M_0(z) - f(z)) \phi_0(z,x) \phi_1(z,x), \quad z\in\C\backslash\R,
\end{align*}
it vanishes as $|z|\to\infty$ along our nonreal rays. For the first two terms this
follows from \eqref{asympsi} together with our hypothesis that $\phi_0(\,\cdot\,,x)$ and $\phi_1(\,\cdot\,,x)$
have the same asymptotics. The last term tends to zero because of our assumption on the difference of the Weyl functions.
Moreover, by our hypothesis $G_x$ is of growth order at most $s$ and thus we
can apply the Phragm\'en--Lindel\"of theorem (e.g., \cite[Section~6.1]{lev}) in the sectors bounded by our rays.
This shows that $G_x$ is bounded on all of $\C$.
By Liouville's theorem it must be constant and since it vanishes along a ray, it must be zero; that is,
\begin{align*}
\phi_1(z,x) \theta_0(z,x) - \phi_0(z,x) \theta_1(z,x) = f(z)\phi_0(z,x)\phi_1(z,x), \quad x\in(a,c),~z\in\C.
\end{align*}
Dividing both sides of this identity by $\phi_0(z,x)\phi_1(z,x)$, differentiating with respect to $x$, and using $W(\theta_j,\phi_j)=1$ shows
$\phi_1(z,x)^2 = \phi_0(z,x)^2$. Taking the logarithmic derivative further gives
$\phi_1'(z,x)/\phi_1(z,x)=\phi_0'(z,x)/\phi_0(z,x)$, which shows $W(\phi_0,\phi_1)(a)=0$. Differentiating once more shows
$\phi_1''(z,x)/\phi_1(z,x)=\phi_0''(z,x)/\phi_0(z,x)$. This finishes the proof since $q_j(x)=z + \phi_j''(z,x)/\phi_j(z,x)$.
\end{proof}

Note that the implication (iii) $\Rightarrow$ (i) could also be proved under somewhat weaker conditions.
First of all the assumption on the growth of the entire functions $f(z)$ is only due to the use of the Phragm\'{e}n--Lindel\"{o}f principle.
Hence it would also suffice that for each $\eps>0$ we have
\begin{align}\label{eqnbmaltgrowth}
\sup_{|z|=r_n} |f(z)| \leq B \E^{A r_n^{s+\eps}},
\end{align}
for some increasing sequence of positive numbers $r_n\uparrow\infty$ and constants $A$, $B\in\R$.
Furthermore, for this implication to hold it would also suffice that the solutions have the same order of magnitude as $|z|\rightarrow\infty$ along our nonreal rays instead of the same asymptotics.

While at first sight it might look like the condition on the asymptotics of the solutions $\phi_j(z,x)$ requires knowledge
about them, this is not the case, since the high energy asymptotics will only involve some qualitative information
on the kind of the singularity at $a$ as we will show in Section~\ref{secPQHO}.
Next, the appearance of the additional freedom of the function $f(z)$ just reflects the fact that we only ensure the same normalization
for the solutions $\phi_0(z,x)$ and $\phi_1(z,x)$ but not for $\theta_0(z,x)$ and $\theta_1(z,x)$ (cf.\ Remark~\ref{rem:uniqExp}).

\begin{corollary}\label{corbm}
Suppose $\theta_0(z,x)$, $\theta_1(z,x)$, $\phi_0(z,x)$, $\phi_1(z,x)$ are of growth order at most $s$ for some $s>0$ and  $\phi_1(z,x) \sim \phi_0(z,x)$ for some $x\in(a,b_0)\cap(a,b_1)$ as $|z|\to\infty$ along some nonreal rays dissecting the complex plane into sectors of opening angles less than $\nicefrac{\pi}{s}$.
If 
\begin{align}\label{eqncorbm}
 M_1(z) - M_0(z) = f(z), \quad z\in\C\backslash\R,
\end{align}
for some entire function $f(z)$ of growth order at most $s$, then $H_0=H_1$.
\end{corollary}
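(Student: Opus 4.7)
The plan is to derive the corollary directly from Theorem~\ref{thmbm}. Since $M_1(z) - M_0(z) = f(z)$ with $f$ entire of growth order at most $s$, condition~(iii) of Theorem~\ref{thmbm} is trivially satisfied (the $\OO$-term vanishes identically) for every $c \in (a, b_0) \cap (a, b_1)$. Applying the implication (iii)~$\Rightarrow$~(i) yields $q_0 = q_1$ almost everywhere on $(a, c)$ together with $W(\phi_0, \phi_1)(a) = 0$; letting $c \uparrow \min(b_0, b_1)$ extends this to the full common interval. Inspection of the proof of Theorem~\ref{thmbm} actually shows that $\phi_0(z, x)^2 = \phi_1(z, x)^2$ on $(a, \min(b_0, b_1))$, so after adjusting the sign of $\phi_1$ if necessary we may take $\phi_0 \equiv \phi_1$ there.

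Next I would identify the right endpoints. Since each $M_j$ satisfies $M_j(z^*) = M_j(z)^*$, their difference $f$ is real on $\R$, and the Stieltjes--Liv\v{s}i\'{c} inversion formula~\eqref{defrho} then gives $d\rho_0 = d\rho_1 =: d\rho$. Assuming without loss of generality that $b_0 \leq b_1$, for any $g \in L^2(a, b_0)$ let $\tilde g \in L^2(a, b_1)$ denote its extension by zero; because $\phi_0 = \phi_1$ on $(a, b_0)$, one finds $U_1 \tilde g = U_0 g$ in $L^2(\R, d\rho)$ (first for compactly supported $g$, then by density). Surjectivity of $U_0$ combined with injectivity of $U_1$ forces $\{\tilde g : g \in L^2(a, b_0)\} = L^2(a, b_1)$, which is only possible if $b_0 = b_1$.

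With $b_0 = b_1 =: b$ established, both $U_0$ and $U_1$ become unitary maps from $L^2(a, b)$ onto $L^2(\R, d\rho)$; the identities $\phi_0 = \phi_1$ and $d\rho_0 = d\rho_1$ give $U_0 = U_1$, and since each $U_j$ intertwines $H_j$ with multiplication by $\lambda$, we conclude $H_0 = H_1$. The main obstacle is precisely this passage from equality of potentials on the common interval to honest equality of the operators when the right endpoints could a priori differ; the unitarity of the Gesztesy--Zinchenko spectral transformation provides the clean bridge.
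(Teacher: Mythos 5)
Your proof is correct, but its second half takes a genuinely different route from the paper. Both arguments begin identically: condition (iii) of Theorem~\ref{thmbm} holds trivially for every $c$, so $q_0=q_1$ a.e.\ on the common interval, and inspection of that proof (together with the normalization forced by $\phi_1\sim\phi_0$, which in fact already fixes the sign, so your sign adjustment is superfluous --- and note that flipping $\phi_1$ alone would destroy $W(\theta_1,\phi_1)=1$) gives $\phi_0\equiv\phi_1$ there. The paper then stays at the level of solutions: it re-derives the relations \eqref{eqnLBMsolPHI}--\eqref{eqnLBMsolTHETA} to conclude $\psi_1(z,x)=\psi_0(z,x)$ on $(a,b_0)$, and rules out $b_0<b_1$ by observing that $b_0$ would then be a regular endpoint for $H_0$, forcing the Weyl solution $\psi_1$ of $H_1$ to satisfy a fixed boundary condition at the interior point $b_0$ for all $z$, which is impossible; equality of the Weyl solutions then also identifies the boundary conditions at $b_0$. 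You instead pass to the spectral side: since $f$ is entire and real on $\R$ (from $M_j(z^*)=M_j(z)^*$), the Stieltjes--Liv\v{s}i\'{c} formula \eqref{defrho} yields $\rho_0=\rho_1$, and the unitarity of the Gesztesy--Zinchenko transforms $U_j$, via $U_1\circ\iota=U_0$ with $\iota$ the extension-by-zero map, forces $\iota$ to be onto and hence $b_0=b_1$, after which $U_0=U_1$ gives $H_0=H_1$ at the operator level, with both boundary conditions handled automatically. Your route invokes the heavier machinery of the spectral transformation (and implicitly anticipates the logic of Theorem~\ref{thmSpectFuncDisc} in reverse), but it buys a cleaner, one-stroke operator identification; the paper's argument is more elementary and purely ODE-theoretic, needing only the structure of the Weyl solution and the regularity of the endpoint. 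Both are complete proofs.
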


\begin{proof}
Without loss of generality, suppose that $b_0\leq b_1$.
Theorem~\ref{thmbm} shows that $q_0(x)=q_1(x)$ for almost all  $x\in(a,b_0)$ and that the boundary condition at $a$ (if any) is the same.
As in the proof of Theorem~\ref{thmbm} one sees that~\eqref{eqnLBMsolPHI} and~\eqref{eqnLBMsolTHETA} hold, hence
\begin{align*}
  \psi_1(z,x) & = \theta_1(z,x) + M_1(z)\phi_1(z,x) \\
              & = \theta_0(z,x) - f(z)\phi_1(z,x) + (M_0(z)+f(z))\phi_0(z,x) \\
              & = \theta_0(z,x) + M_0(z)\phi_0(z,x) = \psi_0(z,x), 
\end{align*}
for each $x\in(a,b_0)$, $z\in\C\backslash\R$.
If $b_0<b_1$, then the right endpoint $b_0$ of $H_0$ were regular as $q_1$ is integrable over $[c,b_0]$.
Thus $\psi_0(z,x)$ and hence also $\psi_1(z,x)$ would satisfy some boundary condition at $b_0$.
Since this is not possible, we necessarily have $b_0=b_1$. 
Finally since $\psi_0(z,x)=\psi_1(z,x)$, $H_0$ and $H_1$ have the same boundary condition at $b_0$ (if any).
\end{proof}

Note that instead of~\eqref{eqncorbm} it would also suffice to assume that for each fixed value $c\in(a,b_0)\cap(a,b_1)$,
\begin{align*}
 M_0(z) - M_1(z) = f(z) + \OO\left(\frac{1}{\phi_0(z,c)^2}\right),
\end{align*}
as $|z|\rightarrow\infty$ along our nonreal rays and that $M_1(z_0)= M_0(z_0)+f(z_0)$ for one fixed nonreal $z_0\in\C\backslash\R$.

\section{Uniqueness results for operators with discrete spectra}
\label{sec:urds}

Now we are finally able to investigate when the spectral measure determines the potential for operators with purely discrete spectrum.
In this respect, observe that the uniqueness results for the singular Weyl function from the previous sections do not  immediately yield
such results. In fact, if $\rho_0=\rho_1$, then the difference of the corresponding singular Weyl functions is an entire function by Theorem~\ref{IntR}.
However, in order to apply Corollary~\ref{corbm} we would need some bound on the growth order of this function. 
Fortunately, in the case of purely discrete spectrum with finite convergence exponent, a refinement of the arguments in the proof
of Theorem~\ref{thmbm} shows that the growth condition is not necessary.

\begin{corollary}\label{corbmdis}
Suppose $\phi_0(z,x)$, $\phi_1(z,x)$ are of growth order at most $s$ for some $s>0$ and  $\phi_1(z,x) \sim \phi_0(z,x)$ for an $x\in(a,b_0)\cap(a,b_1)$ as $|z|\to\infty$ along some nonreal rays dissecting the complex plane into sectors of opening angles less than $\nicefrac{\pi}{s}$.
Furthermore, assume that $H_0$ and $H_1$ have purely discrete spectrum with convergence exponent at most $s$.
If 
\be
 M_1(z) - M_0(z) = f(z), \quad z\in\C\backslash\R,
\ee
for some entire function $f(z)$, then $H_0=H_1$.
\end{corollary}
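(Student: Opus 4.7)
My approach is to first identify the spectral measures of $H_0$ and $H_1$ as equal, then gauge both fundamental systems so that the singular Weyl functions coincide, and finally reduce to Corollary~\ref{corbm}.

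Since each $H_j$ has purely discrete spectrum, $M_j(z)$ is meromorphic on $\C$ with simple poles at the eigenvalues $\{\lam_n^{(j)}\}_n$ and residue $-\rho_j(\{\lam_n^{(j)}\})$ at $\lam_n^{(j)}$ (this is visible from the integral representation~\eqref{Mir}). Since $f=M_1-M_0$ is entire and hence pole-free, the poles and residues of $M_0$ and $M_1$ must coincide. As both $\rho_0$ and $\rho_1$ are purely atomic, this forces $\rho_0=\rho_1=:\rho$.

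Next I would exploit this common spectral measure as in Remark~\ref{rem:herg}. Choose a real entire function $g(z)$ of finite growth order with $\E^{-2g(\lam)}\in L^1(\R,d\rho)$, which is available under our convergence-exponent hypothesis (for instance via the admissible choice $\hat g(z)=\exp(z^{2\lceil (p+1)/2\rceil})$ noted at the end of Section~\ref{sec:egr}). Writing, for each $j$,
\[
M_j(z)=\E^{2g(z)}\int_\R\frac{\E^{-2g(\lam)}}{\lam-z}\,d\rho(\lam)-E_j(z)
\]
with real entire $E_j$, I apply the gauge transformation of Remark~\ref{rem:uniq} simultaneously to both systems, using this single $g$ and the choices $f_j(z)=\E^{-g(z)}E_j(z)$. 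The new fundamental systems
\[
\ti\phi_j(z,x)=\E^{g(z)}\phi_j(z,x),\qquad \ti\theta_j(z,x)=\E^{-g(z)}\theta_j(z,x)-\E^{-g(z)}E_j(z)\phi_j(z,x)
\]
both yield the same singular Weyl function $\ti M_0(z)=\ti M_1(z)=\int_\R(\lam-z)^{-1}\E^{-2g(\lam)}\,d\rho(\lam)$. Since both $\phi_0$ and $\phi_1$ pick up the same factor $\E^{g(z)}$, the hypothesis $\ti\phi_1\sim\ti\phi_0$ along the designated nonreal rays is preserved.

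Finally, with the rescaled difference $\ti f\equiv 0$ (entire of any finite growth order), the plan is to invoke Corollary~\ref{corbm} for the rescaled data to conclude $H_0=H_1$. The main obstacle is reconciling the growth orders: the rescaling factor $\E^{g(z)}$ will in general raise the growth order of $\ti\phi_j$ and $\ti\theta_j$ from $s$ to some $s'=\max\{s,2\lceil(p+1)/2\rceil\}$, so that the original angular dissection (with opening less than $\pi/s$) may no longer be fine enough for a Phragm\'en--Lindel\"of argument on sectors of opening less than $\pi/s'$. I would resolve this either by choosing $g$ more economically as a polynomial of degree $\leq p\leq s$ when $H_j$ is semibounded (again using the remark at the end of Section~\ref{sec:egr}), or, in general, by inserting additional nonreal rays to refine the original sectors down to openings less than $\pi/s'$; the required asymptotic $\ti\phi_1\sim\ti\phi_0$ on each new ray holds automatically by continuity within each original sector. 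Once this angular adjustment is in place, Corollary~\ref{corbm} applies to the rescaled data and yields $H_0=H_1$.
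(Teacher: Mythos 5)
Your reduction stalls at exactly the point you flag, and neither of your proposed repairs works. First, the gauge: the paper only guarantees the admissible choices $\hat g(z)=\exp(z^{2\ceil{(p+1)/2}})$ (or $\exp(z^{p+1})$ if $H$ is bounded from below), so $g$ has degree $2\ceil{(p+1)/2}$, respectively $p+1$ --- not degree $\le p\le s$ as you assert; since the genus only satisfies $p\le s\le p+1$, the degree of $g$ can strictly exceed $s$ (e.g.\ when $s=p$), and integrability of $\E^{-2g}$ against $\rho$ for some $g$ of degree $\le s$ is simply not available in general. Second, the ray refinement: the equivalence $\phi_1\sim\phi_0$ is a \emph{hypothesis} along the given rays only, and it does not propagate to intermediate rays ``by continuity'' --- a ratio tending to $1$ on the two boundary rays of a sector says nothing inside it (consider multiplying $\phi_0$ by $1+\E^{h(z)}$ with $\re h\to-\infty$ on the boundary rays but $\re h\to+\infty$ in between). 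Third, and independently fatal: Corollary~\ref{corbm} also requires $\theta_0,\theta_1$ to be of growth order at most $s$, whereas Corollary~\ref{corbmdis} assumes nothing about $\theta_j$ (nor about the entire functions $E_j$ in your representation); as the paper points out in Section~\ref{sec:egr}, one cannot in general even produce a second solution of finite growth order, so your gauged $\ti\theta_j=\E^{-g}\theta_j-\E^{-g}E_j\phi_j$ has uncontrolled growth and Corollary~\ref{corbm} does not apply.

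The paper's proof avoids all of this by never touching $f$, $\theta_j$, or any gauge. It bounds the entire function $G_x$ from \eqref{eqnthmbmentfun} directly: off the real axis $G_x=\phi_1\psi_0-\phi_0\psi_1$, and writing $\psi_j=\chi_j/W(\chi_j,\phi_j)$ with right-endpoint solutions $\chi_j$ of order at most $s$ (these exist by Theorem~\ref{thm:IOphiev} precisely because the spectra are discrete with convergence exponent at most $s$ --- this is where the discreteness hypothesis enters), the minimal-modulus Lemma~\ref{lemesthp} applied to the Wronskian denominators gives $\sup_{|z|=r_n}|G_x|\le B\E^{A r_n^{\tilde s}}$ on a sequence of circles $r_n\uparrow\infty$, with $\tilde s$ as close to $s$ as desired. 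Since the sector openings are \emph{strictly} less than $\nicefrac{\pi}{s}$, a slightly larger $\tilde s$ still fits the original rays, so the Phragm\'en--Lindel\"of step goes through in the weakened form \eqref{eqnbmaltgrowth} of the remark after Theorem~\ref{thmbm}, with no new rays and no growth assumption on $f$ whatsoever. Any salvage of your plan would require exactly this kind of minimal-modulus control, at which point you are reproving the paper's argument; your preliminary identification $\rho_0=\rho_1$ (which is correct, via residues of the meromorphic $M_j$) then becomes superfluous --- indeed the paper deduces the spectral-measure uniqueness (Theorem~\ref{thmSpectFuncDisc}) \emph{from} Corollary~\ref{corbmdis}, not the other way around.
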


\begin{proof}
 It suffices to show that the functions $G_x$, $x\in(a,b_0)\cap(a,b_1)$, defined by~\eqref{eqnthmbmentfun} in Theorem~\ref{thmbm} satisfy a growth restriction as in~\eqref{eqnbmaltgrowth}.
 Because of Theorem~\ref{thm:IOphiev}, there are real entire solutions $\chi_0(z,x)$, $\chi_1(z,x)$ of growth order at most $s$, which are square integrable near the right endpoint and satisfy the boundary condition there if necessary.
 These solutions are related to the Weyl solutions $\psi_0(z,x)$, $\psi_1(z,x)$ by
 \begin{align*}
  \psi_j(z,x) = \frac{\chi_j(z,x)}{W(\chi_j,\phi_j)(z)}, \quad x\in(a,b_j),~ z\in\rho(H_j),~ j=0,1.
 \end{align*}
 Using the definition of the singular Weyl functions, we get for each $x\in(a,b_0)\cap(a,b_1)$, 
 \begin{align*}
G_x(z) & = \phi_1(z,x)  \theta_0(z,x) - \phi_0(z,x) \theta_1(z,x) - f(z)\phi_0(z,x) \phi_1(z,x) \\
 & = \phi_1(z,x) \psi_0(z,x) - \phi_0(z,x) \psi_1(z,x) \\
 & = \frac{\phi_1(z,x) \chi_0(z,x)}{W(\chi_0,\phi_0)(z)} - \frac{\phi_0(z,x) \chi_1(z,x)}{W(\chi_1,\phi_1)(z)}, \quad z\in\C\backslash\R.
\end{align*}
 Now since the numerators and denominators in the last line are of growth order at most $s$, Lemma~\ref{lemesthp} shows that there is some increasing sequence $r_n\uparrow\infty$ and positive constants $A_x, B_x$ such that 
\begin{align*}
 \sup_{|z|=r_n} |G_x(z)| \leq B_x \E^{A_x r_n^{\tilde{s}}}, \quad n\in\N,
\end{align*}
where $\tilde{s}>s$ such that our nonreal rays dissect the complex plane into sectors of opening angles less than $\nicefrac{\pi}{\tilde{s}}$.
Now use the remark after Theorem~\ref{thmbm}.
\end{proof}

Now the lack of a growth restriction in Corollary~\ref{corbmdis} implies that it immediately translates into a corresponding uniqueness result for the spectral measure.

\begin{theorem}\label{thmSpectFuncDisc}
Suppose that $\phi_0(z,x)$, $\phi_1(z,x)$ are of growth order at most $s$ for some $s>0$ and $\phi_1(z,x)\sim\phi_0(z,x)$ for an $x\in(a,b_0)\cap(a,b_1)$ as $|z|\rightarrow\infty$ along some nonreal rays dissecting the complex plane into sectors of opening angles less than $\nicefrac{\pi}{s}$.  
Furthermore, assume that $H_0$ and $H_1$ have purely discrete spectrum with convergence exponent at most $s$. If the corresponding spectral measures $\rho_0$ and $\rho_1$ are equal, then we have $H_0=H_1$.
\end{theorem}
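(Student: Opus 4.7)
The plan is to reduce the statement directly to Corollary~\ref{corbmdis} by showing that, under the hypothesis $\rho_0 = \rho_1$, the difference $M_1(z) - M_0(z)$ is an entire function. Once that is established, all the other hypotheses of Corollary~\ref{corbmdis} (growth order of the $\phi_j$, matching high-energy asymptotics along the specified rays, discreteness of the spectra with convergence exponent at most $s$) are already assumed here, so the conclusion $H_0 = H_1$ follows immediately.

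To produce the entirety of $M_1 - M_0$, I would invoke Theorem~\ref{IntR} applied separately to $M_0$ and $M_1$. The key observation is that the integrability condition on the auxiliary function $\hat{g}(z)$, namely $(1+\lambda^2)^{-1}\hat{g}(\lambda)^{-1} \in L^1(\R,d\rho_j)$, depends only on the spectral measure $\rho_j$. Since $\rho_0 = \rho_1$, one and the same entire function $\hat{g}(z)$ may be used in both representations. Subtracting the two integral representations, the singular integral contributions cancel exactly and only the real entire pieces $E_0(z)$ and $E_1(z)$ survive, so
\begin{equation*}
M_1(z) - M_0(z) = E_1(z) - E_0(z), \qquad z \in \C \setminus \sigma(H_0) \cup \sigma(H_1),
\end{equation*}
which is the required entire function $f(z)$.

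With this in hand I would simply cite Corollary~\ref{corbmdis}: its hypotheses on the solutions $\phi_0$, $\phi_1$, on the asymptotic sectors, on the discreteness of the spectra, and on the existence of a fixed entire $f$ with $M_1 - M_0 = f$ are all in place, so we conclude $H_0 = H_1$. I expect no real obstacle here: the crucial non-obvious input is Corollary~\ref{corbmdis} itself, in which the growth restriction on $f$ present in Corollary~\ref{corbm} has already been removed by exploiting discreteness together with the minimal-modulus estimate of Lemma~\ref{lemesthp}. The only thing requiring a small remark is the cancellation of the singular parts in the integral representations, which is why it is essential to arrange the \emph{same} normalizing function $\hat{g}(z)$ on both sides.
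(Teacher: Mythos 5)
Your proposal is correct and is essentially the paper's own proof: the authors likewise apply Theorem~\ref{IntR} to conclude from $\rho_0=\rho_1$ that $M_1-M_0$ is entire, and then invoke Corollary~\ref{corbmdis}. Your added remark about fixing one and the same $\hat{g}$ in both integral representations (possible since the integrability condition depends only on the common measure) is a correct and worthwhile elaboration of the step the paper leaves implicit.
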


\begin{proof}
Since the spectral measures are the same, Theorem~\ref{IntR} shows that the difference of the corresponding singular Weyl functions is an entire function and Corollary~\ref{corbmdis} is applicable.
\end{proof}

It should be emphasized that a similar result  has been proven in \cite{je} using the theory of de Branges spaces. However, the assumptions on $\phi_j(z,x)$ are of a different nature
and we are not aware of how to verify the assumptions on $\phi_j(z,x)$ to apply \cite[Theorem~4.1]{je} to the examples envisaged in the present paper. Nevertheless,
in some sense our assumptions here are stronger since they exclude (in the case $(a,b)=\R$) the possibility that one potential is a translation of the other
(which clearly would leave the spectral measure unchanged).

Note that in the case of discrete spectra, the spectral measure is uniquely determined by the eigenvalues $\lambda_n$ and the corresponding norming constants
\be
 \gamma_n^2 = \int_a^b \phi(\lambda_n,x)^2 dx,
\ee
since in this case we have
\be
 \rho = \sum_n \gamma_n^{-2} \delta_{\lambda_n},
\ee
where $\delta_{\lambda}$ is the unit Dirac measure in the point $\lambda$.

As another application, we are also able to prove a generalization of Hochstadt--Lieberman type uniqueness results.
To this end, let us consider an operator $H$ whose spectrum is purely discrete and has convergence exponent (at most) $s$.
Since the operator $H^D_c = H^D_{(a,c)} \oplus H^D_{(c,b)}$ with an additional Dirichlet boundary condition at $c$ is
a rank one perturbation of $H$, we conclude that the  convergence exponents of both $H^D_{(a,c)}$ and $H^D_{(c,b)}$ are
at most $s$ and hence by Theorem~\ref{thm:IOphiev} there are real entire solutions $\phi(z,x)$ and $\chi(z,x)$ of growth order at most $s$ which are
in the domain of $H$ near $a$ and $b$, respectively.

\begin{theorem}\label{thmHL}
Suppose $H_0$ is an operator with purely discrete spectrum of finite convergence exponent $s$. Let $\phi_0(z.x)$ and $\chi_0(z,x)$ be
entire solutions of growth order at most $s$ which lie in the domain of $H_0$ near $a$ and $b$, respectively, and suppose there is a $c\in (a,b)$ such that
\be\label{eqeahl}
\frac{\chi_0(z,c)}{\phi_0(z,c)}=\OO(1),
\ee
as $|z|\rightarrow\infty$ along some nonreal rays dissecting the complex plane into sectors of opening angles less than $\nicefrac{\pi}{s}$.
 Then every other isospectral operator $H_1$ for which $q_1(x)=q_0(x)$ almost everywhere on $(a,c)$ and which is associated with the same boundary
condition at $a$ (if any) is equal to $H_0$.
\end{theorem}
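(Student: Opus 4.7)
The plan is to reduce to Corollary \ref{corbmdis} by establishing $M_1(z) = M_0(z)$. Since $q_0 = q_1$ almost everywhere on $(a,c)$ with the same boundary condition at $a$, I fix the real entire fundamental system so that $\phi_1(z,x) = \phi_0(z,x)$ and $\theta_1(z,x) = \theta_0(z,x)$ for $x \in (a,c]$. Because $H_1$ is isospectral to $H_0$, the paragraph preceding the theorem (rank-one perturbation) gives that $H_{(c,b_1)}^D$ also has purely discrete spectrum of convergence exponent at most $s$, and the right-endpoint analog of Theorem \ref{thm:IOphiev} yields a real entire solution $\chi_1(z,x)$ of growth order at most $s$ lying in the domain of $H_1$ near $b_1$. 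The Wronskians $W_j(z) := W(\chi_j, \phi_0)(z) = W(\chi_j,\phi_j)(z)$ are entire of order at most $s$ with common zero set $\sigma(H_0) = \sigma(H_1)$, so by Hadamard's factorization $W_1/W_0 = \E^{Q(z)}$ for some polynomial $Q$ of degree at most $\lfloor s \rfloor$. Replacing $\chi_1$ by $\E^{-Q(z)}\chi_1$ (allowed by Remark \ref{rem:uniqExp} and preserving growth order at most $s$), I may assume $W_0 = W_1 =: W$. Using $\psi_j = \chi_j/W$ in $\psi_j(z,c) = \theta_0(z,c) + M_j(z)\phi_0(z,c)$ then gives
\begin{equation*}
M_1(z) - M_0(z) = \frac{\psi_1(z,c) - \psi_0(z,c)}{\phi_0(z,c)} = \frac{D(z)}{\phi_0(z,c)\, W(z)}, \qquad D(z) := \chi_1(z,c) - \chi_0(z,c).
\end{equation*}

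The heart of the argument is to show that $F(z) := D(z)/\phi_0(z,c)$ is identically zero. First, $F$ is entire: at any zero $\mu_n$ of $\phi_0(\,\cdot\,,c)$ the identity $\psi_j = \theta_0 + M_j \phi_0$ together with analyticity of $M_j$ at $\mu_n$ (valid whenever $\mu_n \notin \sigma(H_j)$) forces $\psi_0(\mu_n,c) = \theta_0(\mu_n,c) = \psi_1(\mu_n,c)$, so $D(\mu_n) = 0$; in the coincidental case $\mu_n \in \sigma(H_j)$, both $\chi_j(\mu_n,\cdot)$ are proportional to $\phi_0(\mu_n,\cdot)$ and hence vanish at $c$, again giving $D(\mu_n) = 0$. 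The lower bound from Lemma \ref{lemesthp} on $\phi_0(\,\cdot\,,c)$ combined with the maximum modulus principle then shows $F$ has growth order at most $s$. Second, Lemma \ref{lemAsymM} yields $\psi_j(z,c) = (2\sqrt{-z}\phi_0(z,c))^{-1}(1 + \OO(1/\sqrt{-z}))$ in any sector $|\im z|\ge\delta|\re z|$, whence $\psi_1(z,c)-\psi_0(z,c) = \OO(|z|^{-1}|\phi_0(z,c)|^{-1})$ and $W(z) \sim 2\sqrt{-z}\,\phi_0(z,c)\,\chi_0(z,c)$. Using the hypothesis \eqref{eqeahl}, along the given rays
\begin{equation*}
F(z) = \frac{W(z)\,[\psi_1(z,c)-\psi_0(z,c)]}{\phi_0(z,c)} = \OO\!\left(\frac{\chi_0(z,c)/\phi_0(z,c)}{\sqrt{|z|}}\right) = \OO(|z|^{-1/2}).
\end{equation*}
The Phragmén--Lindelöf principle applied in each of the sectors (all of opening less than $\pi/s$) forces $F$ to be bounded on $\C$; Liouville's theorem together with the ray decay $F(z) \to 0$ then makes $F \equiv 0$. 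Hence $D \equiv 0$, so $M_0 = M_1$, and Corollary \ref{corbmdis} delivers $H_0 = H_1$.

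I expect the main obstacles to be twofold. First, verifying entireness of $F$ at the Dirichlet eigenvalues $\mu_n$ requires a case split, with special care in the coincidental case where $\mu_n$ also happens to lie in $\sigma(H_0)$ (so that both the numerator and the denominator vanish and one must check matching orders). Second, it is crucial to extract actual decay of $F$ along the rays, rather than mere boundedness, since without decay Phragmén--Lindelöf plus Liouville would leave $F$ equal to a potentially nonzero constant, in which case $M_1 - M_0 = F/W$ would still carry simple poles at the common spectrum and Corollary \ref{corbmdis} would not apply.
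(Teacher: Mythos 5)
Your proposal is correct, and its skeleton is the same as the paper's: construct $\chi_1$ of growth order at most $s$ via (the right-endpoint analog of) Theorem~\ref{thm:IOphiev}, normalize $W(\chi_1,\phi_1)=W(\chi_0,\phi_0)$ by Hadamard factorization, reduce to the entire function $F(z)=(\chi_1(z,c)-\chi_0(z,c))/\phi_0(z,c)$, kill $F$ by Phragm\'en--Lindel\"of using \eqref{eqeahl}, conclude $M_0=M_1$, and invoke Corollary~\ref{corbmdis}. You deviate in two local steps. For entireness of $F$, the paper gets it structurally: since $q_0=q_1$ on $(a,c)$ and the normalized Wronskians agree, $W(\phi_0,\chi_1-\chi_0)=0$, so $\chi_1-\chi_0=F\phi_0$ on all of $(a,c]$ with $F$ automatically entire; your pointwise case analysis at the zeros of $\phi_0(\,\cdot\,,c)$ also works, but additionally relies on the (standard, and worth stating explicitly) simplicity of those zeros. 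For the decay of $F$, the paper first proves $\chi_1(z,c)\sim\chi_0(z,c)$ by comparing logarithmic derivatives in the Wronskian ratio and then reads off $F=\frac{\chi_0(z,c)}{\phi_0(z,c)}\bigl(\frac{\chi_1(z,c)}{\chi_0(z,c)}-1\bigr)=\oo(1)$ along the rays, whereas you feed both operators into \eqref{asympsi} of Lemma~\ref{lemAsymM}, obtaining the quantitative bound $F=\OO(|z|^{-1/2})$ and bypassing the $\chi_1\sim\chi_0$ computation; your closing remark that mere boundedness would not suffice (the Liouville constant must actually vanish) is exactly the point, and is why the paper's $\oo(1)$ is needed there as well. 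Two shared implicit points you should be aware of: the Hadamard step uses that the zeros of $W_j$ at the eigenvalues are simple, and the absorbed factor $\E^{-Q}$ must be real modulo $\I\pi$ so that $\chi_1$ stays real entire.
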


\begin{proof}
Start with some solutions $\phi_j(z,x)$, $\chi_j(z,x)$ of growth order at most $s$ and note that we can choose $\phi_1(z,x) = \phi_0(z,x)$ for $x\le c$ since $H_1$ and $H_0$
are associated with the same boundary condition at $a$ (if any). Moreover, note that we have $\phi_0(z,x) \sim \phi_1(z,x)$ as $|z|\to\infty$
along every nonreal ray even for fixed $x>c$ by Lemma~\ref{lemPhiAs}. Next note that the zeros of the Wronskian $W(\phi_j,\chi_j)$ are
precisely the eigenvalues of $H_j$ and thus, by assumption, are equal. Hence by the Hadamard factorization theorem $W(\phi_1,\chi_1)= \E^{g} W(\phi_0,\chi_0)$
for some polynomial $g$ of degree at most $s$. Since we can absorb this factor in $\chi_1(z,x)$, we can assume $g=0$ without loss of generality. Hence we have
\begin{align*}
 1= \frac{W(\phi_0,\chi_0)}{W(\phi_1,\chi_1)} & = \frac{\phi_0(z,x)\chi_0'(z,x) - \phi_0'(z,x)\chi_0(z,x)}{\phi_1(z,x)\chi_1'(z,x) - \phi_1'(z,x)\chi_1(z,x)} \\
      & = \frac{\phi_0(z,x)}{\phi_1(z,x)} \frac{\chi_0(z,x)}{\chi_1(z,x)} \left(\frac{\chi_0'(z,x)}{\chi_0(z,x)} - \frac{\phi_0'(z,x)}{\phi_0(z,x)}\right)
      \left(\frac{\chi_1'(z,x)}{\chi_1(z,x)} - \frac{\phi_1'(z,x)}{\phi_1(z,x)}\right)^{-1}
\end{align*}
and by virtue of the well-known asymptotics (see~\cite[Lemma~9.19]{tschroe}) 
\[
 \frac{\chi_j'(z,x)}{\chi_j(z,x)} = - \sqrt{-z} + \OO(1) \quad\text{and}\quad  \frac{\phi_j'(z,x)}{\phi_j(z,x)} = \sqrt{-z} + \OO(1), \quad j=0,1,
\]
as $|z|\rightarrow\infty$ along any nonreal rays, we conclude $\chi_1(z,x) \sim \chi_0(z,x)$ as well.

Furthermore, equality of the Wronskians implies
\[
\chi_1(z,x) = \chi_0(z,x) + F(z) \phi_0(z,x), \quad x \le c,
\]
for some entire function $F(z)$ of growth order at most $s$. Moreover, our assumption \eqref{eqeahl} implies that
\begin{align*}
 F(z) = \frac{\chi_1(z,c)-\chi_0(z,c)}{\phi_0(z,c)} = \frac{\chi_0(z,c)}{\phi_0(z,c)}\left(\frac{\chi_1(z,c)}{\chi_0(z,c)} -1\right)
\end{align*}
vanishes along our rays and thus it must be identically zero by the Phragm\'en--Lindel\"of theorem. Finally, choosing
$\theta_j(z,x)$ such that $\theta_1(z,x) = \theta_0(z,x)$ for $x\le c$ implies that the associated singular Weyl functions
are equal and the claim follows from Corollary~\ref{corbmdis}.
\end{proof}

Note that by \eqref{IOphias} the growth of $\phi_0(\,\cdot\,,c)$ will increase as $c$ increases while (by reflection) the growth of $\chi_0(\,\cdot\,,c)$
will decrease. In particular, if \eqref{eqeahl} holds for $c$, then it will hold for any $c'>c$ as well.

As a first example we give a generalization of the Hochstadt--Lieberman result from \cite{hl} to operators on $[0,1]$ with
Bessel-type singularities at both endpoints. Note that the case $k=l=0$ and $q_0\in L^1(0,1)$ is the classical Hochstadt--Lieberman result.

\begin{theorem}
Let $l,k\ge -\frac{1}{2}$.
Consider an operator of the form
\be
H_0 = -\frac{d^2}{dx^2} + q_0(x), \qquad q_0(x)= \frac{l(l+1)}{x^2} + \frac{k(k+1)}{(1-x)^2} +\ti{q}_0(x), \quad x\in(0,1),
\ee
with $\ti{q}_0$ satisfying
\be
f_l(x)f_k(1-x) \ti{q}_0(x) \in L^1(0,1), \qquad f_l(x)=\begin{cases} x, & l>-\nicefrac{1}{2},\\ (1-\log(x))x, & l= -\nicefrac{1}{2}.\end{cases}
\ee
If $l<\frac{1}{2}$, we choose the Friedrichs extension associated with the boundary condition
\be
\lim_{x\to0} x^l ( (l+1)\phi(x) - x \phi'(x))=0
\ee
at $0$ and similarly at $1$ if $k<\frac{1}{2}$.

Suppose $H_1$ satisfies $q_1(x)=q_0(x)$ for $x\in(0,\nicefrac{1}{2}+\eps)$ and has the same boundary condition at $0$ if $l <\frac{1}{2}$,
where $\eps=0$ if $k\ge l$ and $\eps>0$ if $k<l$. Then, $H_0=H_1$ if both have the same spectrum.
\end{theorem}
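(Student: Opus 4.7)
The plan is to verify the hypotheses of Theorem~\ref{thmHL} with $s=\nicefrac{1}{2}$ and the specified cutoff point $c=\nicefrac{1}{2}+\eps$, from which the conclusion $H_0=H_1$ will follow immediately. Since $\nicefrac{\pi}{s}=2\pi$, the opening-angle condition is vacuous for any pair of distinct nonreal rays, so we may choose for instance the positive and negative imaginary half-axes.

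First I would verify that both $H_0$ and $H_0^D_{(0,1)}$ (hence $H^D_{(0,c)}$ and $H^D_{(c,1)}$) have purely discrete spectrum with convergence exponent $\nicefrac{1}{2}$. The Bessel term $l(l+1)/x^2$ together with the Friedrichs boundary condition (or limit-point situation if $l\ge \nicefrac12$) yields an operator whose eigenvalues obey the classical asymptotics $\mu_n\sim\pi^2 n^2$; the integrability assumption on $\tilde q_0$ is precisely the one that preserves these asymptotics (this is the sort of bound worked out in the Bessel-case papers referenced in Section~\ref{sec:swm}). Consequently, by Theorem~\ref{thm:IOphiev} applied separately at $0$ and at $1$ (via a reflection $x\mapsto 1-x$), there exist real entire solutions $\phi_0(z,x)$ and $\chi_0(z,x)$ of growth order at most $\nicefrac12$, lying in the domain of $H_0$ near $0$ and near $1$ respectively. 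By the construction in the proof of Theorem~\ref{thm:IOphiev}, these solutions can be chosen to match the canonical Bessel-type normalizations so that the high-energy asymptotics derived in the Bessel literature apply.

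The key step is to check the ratio condition \eqref{eqeahl} at $c=\nicefrac12+\eps$. Using the Bessel-type high-energy asymptotics along nonreal rays, one has
\begin{align*}
 \phi_0(z,c) &= C_l\,(-z)^{-(l+1)/2}\,\E^{c\sqrt{-z}}\bigl(1+\OO(|z|^{-1/2})\bigr),\\
 \chi_0(z,c) &= C_k\,(-z)^{-(k+1)/2}\,\E^{(1-c)\sqrt{-z}}\bigl(1+\OO(|z|^{-1/2})\bigr),
\end{align*}
so that
\begin{align*}
 \frac{\chi_0(z,c)}{\phi_0(z,c)} = \frac{C_k}{C_l}\,(-z)^{(l-k)/2}\,\E^{(1-2c)\sqrt{-z}}\bigl(1+\oo(1)\bigr).
\end{align*}
If $k\ge l$, we may take $c=\nicefrac12$, $\eps=0$: the exponential becomes $1$ and the prefactor $(-z)^{(l-k)/2}$ stays bounded. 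If $k<l$, we take $\eps>0$ so $1-2c=-2\eps<0$; since $\re\sqrt{-z}\ge\delta|z|^{1/2}$ along our rays for some $\delta>0$, the exponential decay $\E^{-2\eps\sqrt{-z}}$ dominates any polynomial growth $(-z)^{(l-k)/2}$ and the ratio tends to $0$. In either case \eqref{eqeahl} holds.

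The main obstacle is not the application of Theorem~\ref{thmHL} itself but the first bookkeeping step: extracting sharp high-energy asymptotics with the correct power of $\sqrt{-z}$ for the entire solutions $\phi_0$ and $\chi_0$ associated to the Friedrichs (or limit-point) boundary conditions at the Bessel endpoints, under only the stated integrability hypothesis on $\tilde q_0$. Once those asymptotics are in hand, the argument above provides \eqref{eqeahl} on each nonreal ray, and Theorem~\ref{thmHL} yields $H_0=H_1$.
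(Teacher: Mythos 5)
Your proposal is correct and follows the paper's own route: the paper's proof is exactly an application of Theorem~\ref{thmHL}, with the ratio condition \eqref{eqeahl} supplied by the high-energy asymptotics of the Bessel-type solutions given in Lemma~2.2 (in particular (2.24)) of \cite{kst} --- precisely the asymptotics $\phi_0(z,c)\sim C_l(-z)^{-(l+1)/2}\E^{c\sqrt{-z}}$, $\chi_0(z,c)\sim C_k(-z)^{-(k+1)/2}\E^{(1-c)\sqrt{-z}}$ you quote, whose validity under the stated integrability condition on $\ti{q}_0$ is the content of that cited lemma rather than a gap. Your explicit computation of $\chi_0(z,c)/\phi_0(z,c)$ and the case distinction $k\ge l$ (take $\eps=0$) versus $k<l$ (exponential decay $\E^{-2\eps\sqrt{-z}}$ beats the polynomial factor) merely spells out what the paper's one-line proof leaves implicit.
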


\begin{proof}
Immediate from Theorem~\ref{thmHL} together with the asymptotics of solutions of $H_0$ given in Lemma~2.2 (see in particular (2.24)) of \cite{kst}.
\end{proof}

In particular, this applies for example to the P\"oschl--Teller operator
\be
H= -\frac{d^2}{dx^2} + \frac{\nu(\nu+1)}{\sin(\pi x)^2}, \quad \nu\ge 0,
\ee
whose spectrum is given by $\sig(H)= \{ \pi^2(n+\nu)^2\}_{n\in\N}$ which plays an important role as an explicitly solvable model in physics.

As another result we can generalize Theorem~1.4 from \cite{gs2}.

\begin{corollary}[\cite{gs2}]
Let $H_0$ be an operator on $(-a,a)$ with purely discrete spectrum which is bounded from below and has convergence exponent $s<1$.
If $q_0(x)\ge q_0(-x)$ for $x> 0$, then $q_0$ on $(-a,0)$ and the spectrum uniquely determine $H_0$.
\end{corollary}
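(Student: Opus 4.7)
The plan is to apply Theorem~\ref{thmHL} with $c=0$. Since $s<1$ we have $\pi/s>\pi$, so the positive and negative imaginary half-axes may serve as the dissecting rays (each sector has opening angle $\pi<\pi/s$), and it suffices to verify the ratio bound \eqref{eqeahl} as $|z|\to\infty$ along these rays.

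Both halved problems $H^D_{(-a,0)}$ and $H^D_{(0,a)}$ are rank-one perturbations of $H_0$ (glued through Dirichlet at $0$) and inherit purely discrete spectra with convergence exponent at most $s$. By Theorem~\ref{thm:IOphiev} applied to each half, there exist real entire solutions $\phi_0(z,x)$ and $\chi_0(z,x)$ of growth order at most $s$ lying in the domain of $H_0$ near $-a$ and $a$, respectively, and the zeros of $\phi_0(\,\cdot\,,0)$ and $\chi_0(\,\cdot\,,0)$ are precisely the Dirichlet eigenvalues $\{\mu_n^-\}=\sigma(H^D_{(-a,0)})$ and $\{\mu_n^+\}=\sigma(H^D_{(0,a)})$. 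Replacing $H_0$ by $H_0+K\id$ for sufficiently large $K>0$ (which changes neither the hypothesis nor the conclusion of the corollary), we may assume $\mu_n^\pm>0$ for all $n$. Since $s<1$ the genus of these sequences is zero, so Hadamard's factorization theorem gives
\[
\phi_0(z,0)=C_-\prod_n\left(1-\frac{z}{\mu_n^-}\right), \qquad \chi_0(z,0)=C_+\prod_n\left(1-\frac{z}{\mu_n^+}\right),
\]
with no exponential factor.

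Reflection $x\mapsto -x$ turns $H^D_{(-a,0)}$ into an operator on $(0,a)$ with potential $q_0(-x)$ whose eigenvalues coincide with $\mu_n^-$. Since $q_0(x)\ge q_0(-x)$ for $x>0$ pointwise, the min-max principle yields $\mu_n^+\ge\mu_n^->0$ for all $n$. Consequently, for $z=\I t$ with $t\in\R$,
\[
\left|\frac{\chi_0(\I t,0)}{\phi_0(\I t,0)}\right|^2 = \left|\frac{C_+}{C_-}\right|^2\prod_n \frac{1+t^2/(\mu_n^+)^2}{1+t^2/(\mu_n^-)^2}\le \left|\frac{C_+}{C_-}\right|^2,
\]
since each factor in the product is at most one. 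This verifies \eqref{eqeahl} along both imaginary half-axes, so Theorem~\ref{thmHL} yields $H_0=H_1$.

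The main obstacle is the min-max comparison delivering $\mu_n^+\ge\mu_n^-$: after reflection, the boundary conditions at $a$ for $H^D_{(0,a)}$ and at $a$ for the reflected $H^D_{(-a,0)}$ (originally at $-a$) need not coincide, so one cannot directly invoke the simplest form of operator monotonicity in the potential. This is overcome by comparing quadratic forms on the appropriate form domains and using that Dirichlet interior conditions plus pointwise domination of potentials controls eigenvalues, possibly sandwiching by Friedrichs and Krein-type extensions at the outer endpoints to reduce both sides to a common self-adjoint realization before applying monotonicity.
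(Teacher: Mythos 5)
Your proof follows the paper's own argument essentially verbatim: genus-zero Hadamard products for $\phi_0(\,\cdot\,,0)$ and $\chi_0(\,\cdot\,,0)$ (possible since $s<1$), the min--max comparison of the two half-interval Dirichlet eigenvalue sequences, and boundedness of the resulting ratio along the imaginary axis to verify \eqref{eqeahl} and invoke Theorem~\ref{thmHL}. Your write-up is if anything slightly more careful than the paper's three-line proof --- the spectral shift making all eigenvalues positive, the explicit modulus computation, your flagging of the outer boundary conditions in the min--max step (a point the paper silently glosses over, and which is harmless in the intended limit-point setting of \cite{gs2}), and your inequality $\mu_n^+\ge\mu_n^-$ is the correct direction, the paper's printed $\mu_{-,n}\ge\mu_{+,n}$ being evidently a typo since the bound on $\chi_0/\phi_0$ requires the numerator's zeros to dominate.
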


\begin{proof}
Since the convergence exponent satisfies $s<1$, our solutions are given by
\[
\phi_0(z,0)= \prod_{n\in\N} E_0(\mu_{-,n},z), \qquad \chi_0(z,0)= \prod_{n\in\N} E_0(\mu_{+,n},z),
\]
where $\mu_{\pm,n}$ are the Dirichlet eigenvalues on $(0,\pm a)$, respectively.
By our assumption the min-max principle implies $\mu_{-,n} \ge \mu_{+,n}$ and \eqref{eqeahl} follows from monotonicity of
$E_0(\mu,\I y)$ as $y\to\infty$.
\end{proof}

Note that, as pointed out in \cite[Proposition~5.1]{gs2}, the convergence exponent will satisfy $s\le 1-\frac{\eps}{4+2\eps}$
provided $q(x) \ge C |x|^{2+\eps} - D$ for some $C,D,\eps>0$ and our result is indeed a generalization of \cite[Theorem~1.4]{gs2}.
In fact, it was conjectured in \cite{gs2} and later proven in \cite{kh} that the restriction on the convergence exponent is
indeed superfluous. We will show how to replace the condition $q_0(x)\ge q_0(-x)$ by an asymptotic condition in the
next section.

\section{Perturbed harmonic oscillators}\label{secPQHO}
\label{sec:pho}

In this section we want to investigate perturbations of the quantum mechanical harmonic oscillator and in
particular its isospectral class originally considered by McKean and Trubowitz \cite{mktr} as well as Levitan \cite{le}
(see also Remark~4.8 in \cite{gst}).
We will build on some recent results from Chelkak, Kargaev and Korotyaev, \cite{chelkak}, \cite{ckk}, \cite{ckk2}.

Let $q$ be a real-valued function on $\R$ such that 
\be\label{condho}
  \int_\R \frac{|q(t)|}{1+|t|} dt <\infty
\ee
and consider the Schr\"{o}dinger operator
\be
 H = - \frac{d^2}{dx^2} + x^2 + q(x), \quad x\in\R.
\ee
We will see shortly that $H$ is bounded from below and hence in the l.p.\ case at both endpoints by a Povzner--Wienholtz type argument (cf.\ \cite[Lemma~C.1]{ge}).
Hence no boundary conditions are needed and the associated operator is unique. Moreover, the case $q\equiv 0$ is of course the famous
harmonic oscillator which can be solved explicitly in terms of Weber functions  $D_\nu$ (or parabolic cylinder functions) on $\R$. We refer the reader to
(e.g.) \S16.5 in \cite{whwa} for basic properties of these functions. In particular, two real entire solutions of the unperturbed equation
\be
- \phi_0''(z,x) + x^2 \phi_0(z,x) = z \phi_0(z,x), \quad x\in\R,~z\in\C
\ee
are given by
\be
\phi_{0,\pm}(z,x) = D_{\frac{z-1}{2}}(\pm\sqrt{2}x), \quad x\in\R,~z\in\C.
\ee
They are known to have the spatial asymptotics
 \begin{align}\label{asymD}
  \phi_{0,\pm}(z,x) & \sim (\pm\sqrt{2}x)^{\frac{z-1}{2}} \E^{-\frac{x^2}{2}}, \\
   \phi_{0,\pm}'(z,x) & \sim -2^{-\frac{1}{2}} (\pm\sqrt{2}x)^{\frac{z+1}{2}} \E^{-\frac{x^2}{2}}, 
\end{align}
as $x\rightarrow\pm\infty$, uniformly for all $z$ in bounded domains. In particular, this guarantees that $\phi_{0,\pm}(z,\cdot\,)$ are square integrable near $\pm\infty$.

By virtue of the usual perturbation techniques one can show that \eqref{condho} has solutions which asymptotically look like
the unperturbed ones. Details can be found in \cite[Section~2]{chelkak} or \cite[Section~3]{ckk}. We collect the relevant results plus some
necessary extensions in the following theorem.
 
\begin{theorem}\label{thmpertharmPHI}
There are unique solutions $\phi_\pm(z,x)$ of
\be
   -\phi_\pm''(z,x) + \left(x^2+q(x)\right) \phi_\pm(z,x) = z \phi_\pm(z,x), \quad x\in\R,~z\in\C,
\ee
such that for each $z\in\C$ we have the spatial asymptotics
\be
    \phi_\pm(z,x) \sim D_{\frac{z-1}{2}}(\pm\sqrt{2}x) \quad\text{and}\quad \phi_\pm'(z,x)\sim \pm\sqrt{2} D_{\frac{z-1}{2}}'(\pm\sqrt{2}x),
\ee
as $x\rightarrow \pm\infty$.
 Moreover, for each $x\in\R$ the functions $\phi_\pm(\,\cdot\,,x)$ and $\phi_\pm'(\,\cdot\,,x)$ are real entire functions of growth order at most one.
\end{theorem}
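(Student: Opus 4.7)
By the symmetry $x\mapsto -x$ it suffices to prove the statement for $\phi_+$; the argument for $\phi_-$ is identical. The plan is to obtain $\phi_+(z,x)$ as the unique solution of a Volterra-type integral equation of the form
\begin{align*}
\phi_+(z,x) = \phi_{0,+}(z,x) + \int_x^\infty K(z,x,t)\, q(t)\, \phi_+(z,t)\,dt,
\end{align*}
where the kernel $K(z,x,t)$ is built from $\phi_{0,+}(z,\cdot\,)$ together with a second, linearly independent unperturbed solution (for instance $\phi_{0,-}(z,\cdot\,)$, or the reduction-of-order companion $\phi_{0,+}(z,x)\int^{\,\cdot} \phi_{0,+}(z,s)^{-2}\,ds$). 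The spatial asymptotics \eqref{asymD} of the Weber functions show that $K(z,x,t)\phi_+(z,t)$ grows at most polynomially in $t$ as $t\to +\infty$, so the integrability hypothesis \eqref{condho} forces absolute convergence of the improper integral.

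First I would carry out the standard successive-approximation scheme $\phi_+ = \sum_{n\geq 0} u_n$ with $u_0(z,x) = \phi_{0,+}(z,x)$ and $u_{n+1}$ obtained by plugging $u_n$ into the integrand. For $z$ in compact subsets of $\C$ and $x$ in $[x_0,\infty)$ one gets uniform estimates $|u_n(z,x)| \leq |\phi_{0,+}(z,x)|\, C(z,x_0)^n/n!$, where $C(z,x_0)$ is controlled by $\int_{x_0}^\infty |q(t)|/(1+|t|)\,dt$, so the Neumann series converges absolutely. This gives existence and uniqueness of $\phi_+$ with the prescribed leading order asymptotics at $+\infty$; the corresponding asymptotic for $\phi_+'(z,x)$ follows by differentiating the integral equation with respect to $x$. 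The bulk of this construction is already carried out in \cite{chelkak} and \cite{ckk}.

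Realness is automatic: the coefficients of the equation are real, so the iterates $u_n$ are real entire in $z$ by construction, and analytic continuation yields $\phi_+(z^*,x)^* = \phi_+(z,x)$ for all $z\in\C$. Entirety of $\phi_+(\,\cdot\,,x)$ and $\phi_+'(\,\cdot\,,x)$ for each fixed $x$ follows because every iterate is entire in $z$ and the series converges uniformly on compact subsets of $\C$.

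The main obstacle, and the main new ingredient beyond \cite{chelkak,ckk}, is the growth-order estimate. The starting point is that $\phi_{0,+}(\,\cdot\,,x) = D_{(z-1)/2}(\sqrt{2}x)$ and $\phi_{0,+}'(\,\cdot\,,x)$ are themselves entire of growth order one in $z$: their zeros coincide with the Dirichlet and Neumann eigenvalues of the harmonic oscillator on a half-line, which lie on arithmetic progressions and hence have convergence exponent one, so Hadamard factorization supplies an explicit bound $|\phi_{0,+}(z,x)| \leq C(x)\E^{A(x)|z|}$. The plan is then to propagate this bound through the Volterra iteration by induction: assuming $|u_n(z,x)| \leq C(x)\E^{A(x)|z|} M(x)^n/n!$, one estimates the integrand in the Volterra equation using analogous exponential-in-$|z|$ bounds for the second unperturbed solution entering the kernel, and obtains the estimate for $u_{n+1}$ with the next power of $M(x)$. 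Summing the series preserves the type-one bound $|\phi_+(z,x)|\leq C'(x)\E^{A'(x)|z|}$, i.e., growth order at most one; the same argument applied after differentiating the integral equation handles $\phi_+'$. The hard step is obtaining the uniform-in-$z$ exponential bound $|K(z,x,t)| \leq \tilde{C}(x,t)\E^{\tilde{A}(x)|z|}$ with $\tilde{C}$ locally integrable in $t$, since \eqref{asymD} only provides information for $z$ bounded as $t\to\pm\infty$ and must be supplemented by Hadamard-type bounds for the second unperturbed solution in the $z$-direction.
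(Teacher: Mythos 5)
Your high-level architecture (Volterra integral equation, Neumann series with $1/n!$ bounds, entirety and realness from locally uniform convergence of the iterates) is exactly the construction of Chelkak and of Chelkak--Kargaev--Korotyaev, which the paper does not redo but simply cites for existence, uniqueness and analyticity. The only part of the theorem the paper actually proves is the growth-order bound, and this is where your proposal has a genuine gap, in two places. First, your starting estimate is false: knowing that the zeros of $\phi_{0,+}(\,\cdot\,,x)$ have convergence exponent one gives, via Hadamard factorization, control of the \emph{order} only, never of the \emph{type}, so the claimed bound $|\phi_{0,+}(z,x)|\le C(x)\E^{A(x)|z|}$ does not hold. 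Concretely, $\phi_{0,+}(z,0)=D_{\frac{z-1}{2}}(0)=2^{\frac{z-1}{4}}\sqrt{\pi}/\Gamma\bigl(\frac{3-z}{4}\bigr)$ grows like $\E^{\frac{1}{4}|z|\log|z|}$ along rays approaching the positive real axis (order one, \emph{maximal} type), which is exactly why the paper's final estimate reads $B\E^{A|z|\log|z|}$ rather than $B\E^{A|z|}$. This error is repairable, since order at most one only requires $\E^{\OO(|z|\log|z|)}$, but it invalidates your induction as literally stated ("summing the series preserves the type-one bound").

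Second, and more seriously, the step you yourself flag as "the hard step" is left unproved, and the tools you propose cannot supply it. Pointwise Hadamard-type bounds in $z$ for fixed $t$ come with constants that degenerate as $t\to\infty$ (the second unperturbed solution grows like $\E^{t^2/2}$), while the asymptotics \eqref{asymD} are uniform only for $z$ in bounded sets; neither yields a bound on the kernel that is jointly uniform in $(z,t)$ with decay $(1+|t|)^{-1}$ in $t$ -- and note that "locally integrable in $t$" is in any case insufficient, since the Volterra integral runs over the unbounded interval and must be controlled by \eqref{condho} alone, which forces decay precisely of order $(1+|t|)^{-1}$. The obstruction sits at the turning point $t^2\approx z$, where naive products of asymptotics break down; handling it is exactly the content of the weight $(1+|z|^{1/12}+|t^2-z|^{1/4})^{-2}$ in the estimates of \cite[Lemma~3.2]{ckk} and \cite[Corollary~2.6]{chelkak}, which the paper imports rather than re-derives. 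The paper's sole new ingredient is the elementary inequality $1+|z|^{1/12}+|t^2-z|^{1/4}\ge\sqrt{|t|+1}/\sqrt{|z|}$ for $|z|\ge 2$, which converts those cited bounds into $\beta_\pm(z)\le|z|\int|q(t)|(1+|t|)^{-1}dt$ and hence $|\phi_\pm(z,0)|,|\phi_\pm'(z,0)|\le B\E^{A|z|\log|z|}$; the reduction $\phi_\pm(z,x)=\phi_\pm(z,0)c(z,x)+\phi_\pm'(z,0)s(z,x)$ then handles all other $x$ -- a reduction your per-$x$ iteration also silently needs, since the $x$-uniformity of your constants is never addressed. Without the uniform parabolic-cylinder estimates your induction cannot start, so as written the growth-order argument does not go through.
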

 
\begin{proof}
Existence and analyticity of these solutions is proved in~\cite[Section~2]{chelkak} respectively in~\cite[Section~3]{ckk}.
Uniqueness follows from the required asymptotic behavior and it remains to show that these solutions are of growth order at most one.
First of all note that 
\begin{align*}
  \phi_\pm(z,x) = \phi_\pm(z,0) c(z,x) + \phi_\pm'(z,0) s(z,x), \quad x\in\R,~z\in\C,
\end{align*}
where $s(z,x)$ and $c(z,x)$ are the solutions with the initial conditions
\begin{align*}
  s(z,0) = c'(z,0) = 0 \quad\text{and}\quad s'(z,0) = c(z,0) = 1, \quad z\in\C.
\end{align*}
Hence it suffices to show that the entire functions $\phi_\pm(\,\cdot\,,0)$ and $\phi_\pm'(\,\cdot\,,0)$ are of growth order at most one.    
Therefore we will need the estimate
\begin{align}\tag{$*$}\label{eqnSWTinequality}
  1 + \left|z\right|^{\frac{1}{12}} + \left|t^2-z\right|^{\frac{1}{4}} \geq \frac{\sqrt{|t|+1}}{\sqrt{|z|}}, \quad t\in\R,~z\in\C,~|z|\geq 2.
\end{align}
Note that by the reverse triangle inequality we only have to check this for all real $z\in[2,\infty)$. If 
\begin{align*}
 |t| \leq z (z-z^{-1})^{-\frac{1}{2}},
\end{align*}
then from $z\geq 2$ one sees that $|t|+1 \leq 2z$ and hence~\eqref{eqnSWTinequality} holds. 
Otherwise, one ends up with
\begin{align*}
   z |t^2-z|^{\frac{1}{2}} \geq |t|,
\end{align*} 
and again~\eqref{eqnSWTinequality} holds.
Using inequality~\eqref{eqnSWTinequality} we get the following bound for the functions
 \begin{align*}
   \beta_\pm(z) & = \pm \int_0^{\pm\infty} \frac{|q(t)|}{\left(1+|z|^{\frac{1}{12}}+|t^2 - z|^{\frac{1}{4}}\right)^2} dt  \\
    & \leq \pm |z| \int_0^{\pm\infty} \frac{|q(t)|}{|t|+1}dt, \quad z\in\C,~|z|\geq 2.
\end{align*}   
Now the estimates given in~\cite[Lemma~3.2]{ckk} or in~\cite[Corollary~2.6]{chelkak} show
\begin{align*}
    |\phi_\pm(z,0)|,\,|\phi_\pm'(z,0)| & \leq B \E^{A |z|\log|z|}, \quad z\in\C,~|z|\geq 2,    
\end{align*}
for some positive constants $A,B$, which proves the claim. 
\end{proof}
  
In particular, \eqref{asymD} shows that the solutions $\phi_\pm(z,x)$ always lie in $L^2(0,\pm\infty)$.
  
\begin{corollary}
The spectrum of $H$ is purely discrete, bounded from below, and has convergence exponent at most one.
\end{corollary}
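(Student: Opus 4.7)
The plan is to deduce all three conclusions from Theorem~\ref{thmpertharmPHI}, which provides real entire solutions $\phi_\pm(z,x)$ of growth order at most one. By the spatial asymptotic $\phi_\pm(z,x)\sim D_{(z-1)/2}(\pm\sqrt{2}\,x)\sim(\pm\sqrt 2\,x)^{(z-1)/2}\E^{-x^2/2}$, these solutions lie in $L^2$ and hence in the domain of $H$ near the respective singular endpoint for every $z\in\C$.

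For discreteness and the convergence exponent bound I would fix $c\in\R$ and apply Theorem~\ref{thm:IOphiev} to each half-line operator: the real entire solution $\phi_-(z,x)$ shows that $H^D_{(-\infty,c)}$ has purely discrete spectrum of convergence exponent at most one, and the same holds for $H^D_{(c,\infty)}$ via $\phi_+$. Since the Dirichlet-decoupled direct sum $H^D_{(-\infty,c)}\oplus H^D_{(c,\infty)}$ differs from $H$ by a rank-one perturbation in resolvent sense (Krein's formula), the essential spectrum of $H$ is empty and the convergence-exponent bound transfers to the eigenvalues of $H$ as well. Equivalently, the eigenvalues of $H$ are precisely the zeros of the Wronskian $W(\phi_-,\phi_+)(\,\cdot\,)$, which is entire of growth order at most one, so Hadamard's theorem bounds their convergence exponent by one.

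The main obstacle is semiboundedness, since the hypothesis $\int_\R|q(t)|(1+|t|)^{-1}\,dt<\infty$ does not control $q$ from below pointwise. The plan is to show that $q$ is infinitesimally form-bounded with respect to $H_0=-d^2/dx^2+x^2$ and then invoke the KLMN theorem. The key input is the pointwise estimate
\[
   (1+|x|)|\psi(x)|^2 \leq C\,\langle\psi,(H_0+1)\psi\rangle, \qquad x\in\R,~\psi\in C_c^\infty(\R),
\]
which for $x\geq 1$ follows from $|\psi(x)|^2=-2\re\int_x^\infty\bar\psi\psi'\,dt$ combined with Cauchy--Schwarz and the elementary inequality $\int_x^\infty|\psi|^2\,dt\leq x^{-2}\|t\psi\|_{L^2}^2$ (symmetrically for $x\leq-1$), and for $|x|\leq 1$ from the Sobolev embedding $H^1(\R)\hookrightarrow L^\infty(\R)$. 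Integrating against $|q|$ yields
\[
   \Bigl|\int_\R q|\psi|^2\,dx\Bigr| \leq \Bigl(\int_\R\frac{|q(t)|}{1+|t|}\,dt\Bigr)\sup_{x\in\R}(1+|x|)|\psi(x)|^2,
\]
and splitting $q=q\chi_{\{|x|\leq R\}}+q\chi_{\{|x|>R\}}$ makes the relative bound of the outer piece arbitrarily small as $R\to\infty$, while the compactly supported piece is infinitesimally form-bounded via the Gagliardo--Nirenberg bound $\|\psi\|_\infty^2\leq\eps\|\psi'\|_{L^2}^2+C_\eps\|\psi\|_{L^2}^2$. The KLMN theorem then produces a semibounded self-adjoint realization which, under our standing hypotheses, coincides with $H$.
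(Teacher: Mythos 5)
Your proposal is correct, and for two of the three assertions it essentially coincides with the paper's proof: the paper likewise gets discreteness from the existence of the real entire solutions $\phi_\pm(z,\cdot)$ square integrable near $\pm\infty$, and bounds the convergence exponent by noting that the eigenvalues of $H$ are the zeros of $W(\phi_+,\phi_-)(z)=\phi_+(z,0)\phi_-'(z,0)-\phi_+'(z,0)\phi_-(z,0)$, an entire function of growth order at most one --- exactly your ``equivalently'' clause, which you should therefore lead with. Your detour through $H^D_{(-\infty,c)}\oplus H^D_{(c,\infty)}$ and Theorem~\ref{thm:IOphiev} plus a rank-one resolvent perturbation is a legitimate alternative (it mirrors the argument the paper uses in Section~\ref{sec:urds}, in the reverse direction), but beware a small ordering issue: transferring the convergence exponent from the decoupled operator to $H$ by eigenvalue interlacing tacitly invokes min--max and hence semiboundedness, which at that point of your write-up is not yet available; the Wronskian argument avoids this circularity. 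The genuine divergence is semiboundedness: the paper dispatches it in one line by oscillation theory --- the spatial asymptotics $\phi_\pm(z,x)\sim(\pm\sqrt{2}\,x)^{(z-1)/2}\E^{-x^2/2}$ show the solutions are eventually of one sign, so the equation is non-oscillatory at both endpoints and $H$ is bounded from below --- whereas you prove the stronger quantitative statement that $q$ is infinitesimally form-bounded with respect to $H_0=-d^2/dx^2+x^2$ and apply KLMN. Your key estimate is sound: for $x\geq1$, $|\psi(x)|^2\leq 2\|\psi'\|\,(\int_x^\infty|\psi|^2)^{1/2}$ together with $\int_x^\infty|\psi|^2\leq x^{-2}\|t\psi\|^2$ gives $(1+|x|)|\psi(x)|^2\leq C\langle\psi,(H_0+1)\psi\rangle$, the Sobolev bound handles $|x|\leq1$, and the tail splitting makes the relative bound infinitesimal since $\int_{|t|>R}|q(t)|(1+|t|)^{-1}dt\to0$. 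What your route buys is an explicit form bound exploiting precisely hypothesis \eqref{condho}, and it meshes with the paper's own remark that semiboundedness yields the limit point property at $\pm\infty$ by a Povzner--Wienholtz argument, which is what legitimizes your final identification of the KLMN realization with $H$ (a step you should spell out rather than assert); what the paper's route buys is brevity.
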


\begin{proof}
   The existence of real entire solutions which lie in $L^2(\R)$ near $\pm\infty$ guarantees the discreteness of the spectrum.
   Since the eigenvalues of $H$ are the zeros of the entire function of growth order at most one
   \begin{align*}
     W(\phi_+,\phi_-)(z) = \phi_+(z,0) \phi_-'(z,0) - \phi_+'(z,0) \phi_-(z,0), \quad z\in\C,
   \end{align*}
   the spectrum has convergence exponent at most one. To see that the operator is bounded from below note that by the
   spatial asymptotics the underlying differential equation is non-oscillatory.
\end{proof}

In order to apply our uniqueness result Theorem~\ref{thmSpectFuncDisc}, we need high energy asymptotics of the solutions $\phi_\pm(z,x)$.

\begin{lemma}\label{lempertharmphiasym}
For each $x\in\R$ the unperturbed solutions have the asymptotics
\be
  D_{\frac{z-1}{2}}(\pm\sqrt{2}x) = 2^{\frac{z-1}{4}} \frac{\sqrt{\pi}}{\Gamma\left(\frac{3-z}{4}\right)} \E^{\pm x \sqrt{-z}} \left(1 + \OO\left(1/\sqrt{-z}\right)\right)
\ee
as $|z|\rightarrow\infty$ along each nonreal ray.
 Moreover, 
 for each $x\in\R$ the solutions $\phi_\pm(z,x)$ have the asymptotics
 \be
  \phi_\pm(z,x) \sim D_{\frac{z-1}{2}}(\pm\sqrt{2}x),
 \ee
 as $|z|\rightarrow\infty$ along each nonreal ray.
\end{lemma}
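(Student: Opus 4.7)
The proof naturally splits along the two statements of the lemma, both of which reduce to a pointwise claim at $x=0$ via Lemma~\ref{lemPhiAs}.

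For the first asymptotic, the spatial asymptotics~\eqref{asymD} show that $\phi_{0,\pm}(z,x) = D_{\frac{z-1}{2}}(\pm\sqrt{2}x)$ is square integrable near $\pm\infty$ and hence lies in the domain of $H_0 = -\frac{d^2}{dx^2} + x^2$ near the corresponding endpoint. Applying Lemma~\ref{lemPhiAs} (together with its analog at the right endpoint $b = +\infty$, obtained by reversing the roles of the endpoints in its proof) with base point $x_0=0$ yields
\[
D_{\frac{z-1}{2}}(\pm\sqrt{2}x) = D_{\frac{z-1}{2}}(0)\,\E^{\pm x\sqrt{-z}}\bigl(1+\OO(1/\sqrt{-z})\bigr)
\]
as $|z|\to\infty$ along any nonreal ray. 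The explicit prefactor then comes from the classical identity $D_\nu(0) = 2^{\nu/2}\sqrt{\pi}/\Gamma((1-\nu)/2)$ evaluated at $\nu=(z-1)/2$.

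For the second asymptotic, since $\phi_\pm(z,x)$ lies in the domain of the perturbed operator $H$ near $\pm\infty$ by Theorem~\ref{thmpertharmPHI}, Lemma~\ref{lemPhiAs} also gives
\[
\phi_\pm(z,x) = \phi_\pm(z,0)\,\E^{\pm x\sqrt{-z}}\bigl(1+\OO(1/\sqrt{-z})\bigr).
\]
Dividing by the first asymptotic reduces the claim $\phi_\pm(z,x)\sim D_{\frac{z-1}{2}}(\pm\sqrt{2}x)$ to the single-point statement $\phi_\pm(z,0)\sim D_{\frac{z-1}{2}}(0)$ as $|z|\to\infty$ along nonreal rays.

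To establish this last statement, I would invoke the Volterra integral equation from~\cite[Section~2]{chelkak} (respectively~\cite[Section~3]{ckk}) expressing $\phi_\pm - \phi_{0,\pm}$ as an integral involving the Green's kernel of $H_0$ and the perturbation $q$. The estimates in those references (already used in the proof of Theorem~\ref{thmpertharmPHI}) deliver a relative error bound of the form
\[
\bigl|\phi_\pm(z,0) - D_{\frac{z-1}{2}}(0)\bigr| \leq C\beta_\pm(z)\,\bigl|D_{\frac{z-1}{2}}(0)\bigr|,
\]
with $\beta_\pm(z)$ as defined in the proof of Theorem~\ref{thmpertharmPHI}. The main obstacle is then to show $\beta_\pm(z)\to 0$ along each nonreal ray; I would split the integral at a large cutoff $N$. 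On the tail $|t|\geq N$ the bound $|t^2-z|^{1/4}\geq c|t|^{1/2}$ converts the integrand into $|q(t)|/(1+|t|)$ up to constants, so by~\eqref{condho} the tail can be made arbitrarily small. On the bounded portion $|t|\leq N$ the middle term $|z|^{1/12}$ in the denominator produces a uniform factor $|z|^{-1/6}\to 0$. Uniformity in the transition region $|t|^2\sim|z|$ is the most delicate point, but in any sector $|\arg z|\geq\delta>0$ the bound $|t^2-z|\geq|z|\sin\delta$ keeps the denominator large throughout, so the splitting goes through.
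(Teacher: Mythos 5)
Your architecture matches the paper's proof: reduce everything to $x=0$ via Lemma~\ref{lemPhiAs}, get the first claim from the classical value $D_\nu(0)=2^{\nu/2}\sqrt{\pi}/\Gamma\left(\frac{1-\nu}{2}\right)$ (which at $\nu=\frac{z-1}{2}$ is exactly the stated prefactor), and handle the perturbed solutions through the error estimates of \cite{chelkak}, \cite{ckk} combined with $\beta_\pm(z)\to0$ along nonreal rays. Your verification that $\beta_\pm(z)\to 0$ (splitting at a cutoff $N$, the bound $|t^2-z|^{1/2}\gtrsim |t|$ on the tail valid along a fixed nonreal ray, the $|z|^{-1/6}$ gain from the middle term on the compact part) is correct and in fact more explicit than the paper, which simply invokes dominated convergence; your remark about the right-endpoint analog of Lemma~\ref{lemPhiAs} addresses a point the paper glosses over.

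The genuine gap is your displayed ``relative error bound'' $|\phi_\pm(z,0)-D_{\frac{z-1}{2}}(0)|\le C\beta_\pm(z)\,|D_{\frac{z-1}{2}}(0)|$. That is not what the cited estimates deliver. They give $|\phi_\pm(z,0)-D_{\frac{z-1}{2}}(0)|\le C\,\Phi(z)\,\beta_\pm(z)\,\E^{A\beta_\pm(z)}$ with the explicit majorant $\Phi(z)=\left|\frac{z}{2\E}\right|^{\frac{\re z}{4}}\E^{\frac{\pi-\alpha}{4}\im z}(1+|z|)^{-\frac{1}{4}}$, and converting this into a bound relative to $|D_{\frac{z-1}{2}}(0)|$ is precisely the substantive part of the paper's proof: using $D_{\frac{z-1}{2}}(0)=2^{\frac{z-1}{4}}\sqrt{\pi}/\Gamma\left(\frac{3-z}{4}\right)$ and Stirling's formula for the Gamma function, one shows that along each fixed nonreal ray the exponents in $\Phi(z)/D_{\frac{z-1}{2}}(0)$ combine into $\re z\cdot\OO\left(\log\left|1-\frac{3}{z}\right|\right)$ and $\im z\cdot\OO\left(\im\log\left(1-\frac{3}{z}\right)\right)$, both of which are $\OO(1)$, so the ratio stays bounded. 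Note that a bound of the form you assert cannot hold globally on $\C$: $D_{\frac{z-1}{2}}(0)$ vanishes at the points $z=3+4n$, $n\in\N_0$, on the positive real axis while $\Phi$ does not, so the comparison is genuinely ray-dependent and must be established by this Gamma-function computation, which your proposal omits (asserting it as given in the references begs the question). Once that step is supplied (the factor $\E^{A\beta_\pm(z)}$ being harmless since $\beta_\pm(z)\to0$), the remainder of your argument goes through and coincides with the paper's.
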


\begin{proof}
First of all note that because of the asymptotics in Lemma~\ref{lemPhiAs}, it suffices to consider the case $x=0$. Then the first claim is immediate
from
 \begin{align*}\tag{$*$}\label{eqnDz}
  D_{\frac{z-1}{2}}(0) = 2^{\frac{z-1}{4}} \frac{\sqrt{\pi}}{\Gamma\left(\frac{3-z}{4}\right)}, \quad z\in\C. 
 \end{align*}
For the second claim note that the estimates in~\cite[Lemma~3.2]{ckk} or in~\cite[Corollary~2.6]{chelkak} show that
  \begin{align*}
   \left| \phi_\pm(z,0) - D_{\frac{z-1}{2}}(0)\right| \leq C \Phi(z) \beta_\pm(z) \E^{A \beta_\pm(z)}, \quad z\in\C,
  \end{align*}
 for some constant $C$, $A\in\R$ and the function
 \begin{align*}
  \Phi(z) = \left|\frac{z}{2\E}\right|^{\frac{\re z}{4}} \E^{\frac{\pi-\alpha}{4}\im z} (1+|z|)^{-\frac{1}{4}}, \quad z=|z|e^{\I\alpha},~\alpha\in[0,2\pi).
 \end{align*} 
 Lebesgue's dominated convergence theorem shows that $\beta_\pm(z)$ converges to zero as $|z|\rightarrow\infty$ along each ray except the positive real axis.
 Using \eqref{eqnDz} and Stirling's formula for the Gamma function
 \begin{align*}
  \Gamma(z) \sim \E^{-z} z^z \sqrt{\frac{2\pi}{z}},
 \end{align*}
 as $|z|\rightarrow\infty$ along rays except the negative real axis,
 we get for each fixed $\alpha\in(0,2\pi)$
 \begin{align}\label{eqnpertharmfraction}
  \left|\frac{\Phi(z)}{D_{\frac{z-1}{2}}(0)}\right| & = \OO\left(\E^{\frac{\re z}{4}\left(\log|z|-\log|3-z|\right)} \E^{\frac{\im z}{4}\left(\pi-\alpha + \im\log\frac{3-z}{4}\right)} \right),
 \end{align}
  as $|z|\rightarrow\infty$ along the ray with angle $\alpha$.
  Now since
  \begin{align*}
   \log|3-z|-\log|z| =  \log\left| 1-\frac{3}{z}\right| = \OO\left(\frac{1}{z}\right),  
  \end{align*}
  as well as
  \begin{align*}
   \pi-\alpha + \im\log \left(\frac{3-z}{4}\right) = \im\left( \log(3-z) - \log(-z)\right) = \im \log\left( 1-\frac{3}{z}\right) = \OO\left(\frac{1}{z}\right),
  \end{align*}
  as $|z|\rightarrow\infty$ along each ray except the positive real axis, the fraction in~\eqref{eqnpertharmfraction} is bounded along each ray except the positive real axis, which proves the claim.
\end{proof}

Denote by $\lambda_n$, $n\in\N$, the eigenvalues of $H$ in increasing order. Associated to each eigenvalue is a left and right norming constant
\be
\gamma_{n,\pm}^2 = \int_\R \phi_\pm(\lambda_n,x)^2 dx, \quad n\in\N.
\ee
Now an application of Theorem~\ref{thmSpectFuncDisc} yields the following uniqueness theorem.

\begin{theorem}
Let $q$ be a real-valued function satisfying \eqref{condho}.
Then the eigenvalues together with the left or right norming constants determine the function $q$ uniquely.
\end{theorem}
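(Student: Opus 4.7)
The plan is to apply Theorem~\ref{thmSpectFuncDisc} directly with growth order $s=1$. By symmetry, it suffices to treat the case of left norming constants; the right case follows upon interchanging the roles of the two endpoints. So suppose $q_0$ and $q_1$ are two real-valued potentials satisfying~\eqref{condho} whose associated operators $H_0$, $H_1$ share the same eigenvalues $\{\lambda_n\}$ and the same left norming constants $\gamma_{n,-}^2$. For $j=0,1$, let $\phi_{-}^{(j)}(z,x)$ denote the solution provided by Theorem~\ref{thmpertharmPHI} at the endpoint $-\infty$ for the potential $q_j$, and use it as the real entire solution at the base point $a=-\infty$ in the framework of Section~\ref{sec:swm}. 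Since the spectrum is purely discrete, the associated spectral measures take the atomic form
\be\nn
\rho_j = \sum_n \gamma_{n,-}^{-2}\,\delta_{\lambda_n}, \qquad j=0,1,
\ee
as explained immediately after Theorem~\ref{thmSpectFuncDisc}, and therefore coincide under our hypothesis.

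Next I verify the remaining hypotheses of Theorem~\ref{thmSpectFuncDisc}. Theorem~\ref{thmpertharmPHI} shows that each $\phi_{-}^{(j)}(\,\cdot\,,x)$ is real entire of growth order at most one, and the corollary following it ensures that both $H_0$ and $H_1$ have purely discrete spectrum with convergence exponent at most one. Moreover, Lemma~\ref{lempertharmphiasym} yields
\[
\phi_{-}^{(j)}(z,x) \sim D_{\frac{z-1}{2}}(-\sqrt{2}x), \qquad j=0,1,
\]
for each fixed $x\in\R$, as $|z|\to\infty$ along any nonreal ray; in particular $\phi_{-}^{(0)}(z,x)\sim\phi_{-}^{(1)}(z,x)$ along every such ray. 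Choosing, for instance, the four rays with arguments $\pi/4$, $3\pi/4$, $5\pi/4$, $7\pi/4$ partitions $\C$ into four sectors, each of opening $\pi/2<\pi=\pi/s$, so the ray hypothesis of Theorem~\ref{thmSpectFuncDisc} is satisfied.

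All hypotheses being in place, Theorem~\ref{thmSpectFuncDisc} delivers $H_0=H_1$, and in particular $q_0=q_1$ almost everywhere on $\R$. I do not anticipate a substantial obstacle; the only point requiring slight care is the identification of the spectral measure with $\sum_n \gamma_{n,-}^{-2}\delta_{\lambda_n}$, which follows from the spectral transformation of Section~\ref{sec:swm} once one observes that $\phi_{-}^{(j)}(\lambda_n,\,\cdot\,)$ is (an unnormalized) eigenfunction at the eigenvalue $\lambda_n$ and that its $L^2(\R)$-norm squared is precisely $\gamma_{n,-}^2$.
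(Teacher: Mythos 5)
Your proof is correct and takes essentially the same route as the paper's: both apply Theorem~\ref{thmSpectFuncDisc} with $s=1$, using Theorem~\ref{thmpertharmPHI} for the growth order and convergence exponent and Lemma~\ref{lempertharmphiasym} for the matching asymptotics of $\phi_{-}$ along nonreal rays, after noting that equal eigenvalues and norming constants force equal spectral measures $\rho=\sum_n \gamma_{n,-}^{-2}\delta_{\lambda_n}$. Your version merely makes explicit what the paper leaves implicit (the specific choice of rays and the identification of the spectral measure), which is fine.
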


\begin{proof}
 Let $q_0$, $q_1$ be two functions satisfying the condition \eqref{condho}.
 From Theorem~\ref{thmpertharmPHI} and Lemma~\ref{lempertharmphiasym} we infer that the corresponding solutions $\phi_{0,\pm}(z,x)$, $\phi_{1,\pm}(z,x)$ are of growth order at most one and have the same asymptotics on each nonreal ray. Since the spectra of these operators have convergence exponent at most one, the claim follows from Theorem~\ref{thmSpectFuncDisc}.
\end{proof}

This result is very close to \cite[Theorem~1.1]{ckk} with the main advantage that our condition \eqref{condho} is somewhat more explicit than
Condition A from \cite{ckk}, which reads
\be
\int_\R \frac{|q(x)|}{(1 + \left|z\right|^{\frac{1}{12}} + \left|t^2-z\right|^{\frac{1}{4}})^2} \le b(|z|) \|q\|_B
\ee
for some function $b$ which decreases to zero and some Banach space of real-valued functions $B$. Clearly, \eqref{condho} must hold
for the left-hand side to be finite.
Also note that their norming constants $\nu_n$, $n\in\N$, are related to ours via
\be
 \gamma_{\pm,n}^2 = (-1)^n \E^{\mp\nu_n} \dot{W}(\lambda_n), \quad n\in\N,
\ee
where $W$ is the Wronskian 
\be
 W(z)= \phi_-(z,x)\phi_+'(z,x) - \phi_-'(z,x)\phi_+(z,x), \quad x\in\R,~z\in\C,
\ee
and the dot indicates differentiation with respect to $z$.

An application of Theorem~\ref{thmHL} yields a Hochstadt--Lieberman type uniqueness result for the perturbed quantum harmonic oscillator extending Theorem 1.4 from \cite{gs2}.

\begin{theorem}
Let $q$ be a real-valued function satisfying \eqref{condho}.
Then $q$ on $\R_+$ or $\R_-$ together with the eigenvalues determine the function $q$ uniquely.
\end{theorem}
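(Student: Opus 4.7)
The plan is to reduce the statement to a single application of Theorem~\ref{thmHL}. I would first observe that the change of variables $x \mapsto -x$ conjugates $H$ to the Schr\"odinger operator on $\R$ with potential $x^2 + q(-x)$, preserves the spectrum, and also preserves the hypothesis~\eqref{condho}. Consequently, the case where $q$ is known on $\R_+$ follows from the case where $q$ is known on $\R_-$, so it suffices to treat the latter.

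To handle that case, I would apply Theorem~\ref{thmHL} with $a = -\infty$, $b = +\infty$, $c = 0$, and take for its entire fundamental solutions $\phi_0(z,x) = \phi_-(z,x)$ and $\chi_0(z,x) = \phi_+(z,x)$, where $\phi_\pm$ are constructed in Theorem~\ref{thmpertharmPHI}. By that theorem these are real entire functions of growth order at most $1$, and by the spatial asymptotics they lie in the domain of $H$ near $-\infty$ and $+\infty$, respectively. The corollary following Theorem~\ref{thmpertharmPHI} then gives that $H$ has purely discrete spectrum with convergence exponent at most $s = 1$, matching the growth order. The only remaining condition is~\eqref{eqeahl}. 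I would choose the four rays $\arg z \in \{\pi/4,\,3\pi/4,\,5\pi/4,\,7\pi/4\}$, which dissect $\C$ into sectors of opening $\pi/2 < \pi/s$, and appeal to Lemma~\ref{lempertharmphiasym} to see that both $\phi_+(z,0)$ and $\phi_-(z,0)$ are asymptotic to $D_{(z-1)/2}(0)$ along each of these rays. Hence
\[
\frac{\chi_0(z,0)}{\phi_0(z,0)} = \frac{\phi_+(z,0)}{\phi_-(z,0)} \longrightarrow 1
\]
as $|z|\to\infty$ along each such ray, which is in particular $O(1)$. All hypotheses of Theorem~\ref{thmHL} are then met, and it yields $H_1 = H_0$, i.e.\ $q_1 = q_0$ almost everywhere on $\R$.

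The principal obstacle is the choice of the base point $c$. Indeed, for a general $c\in\R$ one has the pointwise asymptotic $\phi_\pm(z,c) \sim \phi_\pm(z,0)\,e^{\pm c\sqrt{-z}}$ by Lemma~\ref{lemPhiAs}, whence $\phi_+(z,c)/\phi_-(z,c)$ behaves like $e^{2c\sqrt{-z}}$ along nonreal rays; since $\re\sqrt{-z}>0$ there, the $O(1)$ bound~\eqref{eqeahl} fails as soon as $c > 0$, and only the symmetric case $c = 0$ is admissible. This is precisely what the initial reflection step procures. A second, milder point is that $s = 1$ sits exactly at the borderline imposed by Theorem~\ref{thmHL}: the opening angles must be strictly less than $\pi$, so two rays will not suffice and at least three (nonreal) rays must be chosen, which is why I take the four rays above.
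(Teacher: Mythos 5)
Your proof is correct and follows essentially the same route as the paper: the paper's own two-line proof likewise invokes Lemma~\ref{lempertharmphiasym} to get $\phi_+(z,0)\sim\phi_-(z,0)$ along nonreal rays, verifying \eqref{eqeahl} at $c=0$, and concludes by Theorem~\ref{thmHL}; your reflection step and explicit choice of rays just make the implicit symmetry and sector bookkeeping explicit. One caveat on your closing remark: since $\phi_+$ lies in the domain near the \emph{right} endpoint, the reflected version of Lemma~\ref{lemPhiAs} gives $\phi_+(z,c)\sim\phi_+(z,0)\,\E^{-c\sqrt{-z}}$, so $\chi_0(z,c)/\phi_0(z,c)$ behaves like $\E^{-2c\sqrt{-z}}$, and \eqref{eqeahl} in fact holds for every $c\ge 0$ and fails for $c<0$ --- the opposite of what you assert, and consistent with the paper's observation after Theorem~\ref{thmHL} that validity of \eqref{eqeahl} at $c$ persists for all $c'>c$; your proof itself is unaffected, since it only uses the (admissible) value $c=0$.
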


\begin{proof}
 By Lemma~\ref{lempertharmphiasym} we have $\phi_+(z,0)\sim\phi_-(z,0)$ as $|z|\rightarrow\infty$ along nonreal rays.
  Hence the claim immediately follows from Theorem~\ref{thmHL}.
\end{proof}

\bigskip
\noindent
{\bf Acknowledgments.}
We thank Rostyslav Hryniv for hints with respect to the literature.
G.T.\ gratefully acknowledges the stimulating
atmosphere at the Isaac Newton Institute for Mathematical Sciences in Cambridge
during October 2011 where parts of this paper were written as part of  the international
research program on Inverse Problems.

\end{document}